\documentclass[12pt,reqno]{amsart}
\usepackage{amsmath}
\usepackage{float}
\usepackage{geometry}
\usepackage{etoolbox}
\usepackage{amscd, amsfonts, amssymb, graphicx, color}
\usepackage{url}
\usepackage{tikz-cd}
\usepackage{hyphenat}                                 \usepackage{mathtools}
\usepackage[colorlinks=true]{hyperref}
\usepackage{titlesec}
\usepackage{cite}
\titleformat{\section}
{\normalfont\bfseries\large}{\thesection}{1em}{}

\makeatletter
\patchcmd\maketitle
{\uppercasenonmath\shorttitle}
{}
{}{}
\patchcmd\maketitle
{\@nx\MakeUppercase{\the\toks@}}
{\the\toks@}
{}
{}{}
\patchcmd\@settitle{\uppercasenonmath\@title}{\Large}{}{}
\patchcmd\@setauthors
{\MakeUppercase{\authors}}
{\authors}
{}{}
\makeatother
\newtheorem{theorem}{Theorem}[section]

\newtheorem{lemma}{Lemma}[section]
\newtheorem{definition}{Definition}[section]
\newtheorem{remark}{Remark}[section]

\newtheorem{corollary}{Corollary}[section]
\newtheorem{Proof of Theorem}{Proof}

\hypersetup{urlcolor=blue, citecolor=red, linkcolor= blue}

\makeatletter
\renewcommand\subsection{\@startsection{subsection}{2}%
	\z@{.7\linespacing\@plus\linespacing}{.5\linespacing}%
	{\normalfont\bfseries}}
\makeatother

\begin{document}
	\title[$q$-Numerical Radius Estimates in Semi- Hilbertian Spaces]{$q$-Numerical Radius Estimates in Semi-Hilbertian Spaces and Their Relations with Matrix Means for Sectorial Matrices}
 \author{Jyoti Rani}
	\address{ndian Institute of Science Education and Research (IISER) Mohali, Knowledge City, S.A.S Nagar,
Punjab 140306}
	\email{jyotirani@iisermohali.ac.in}   
	
	\subjclass[2020]{ 15A60; 47A05; 47B44; 47A63.}
	
	\keywords{$q$-numerical radius, $A$-$q$-numerical radius, monotone function, matrix means.}
	
	\begin{abstract} 
In this paper, the $q$-numerical radius of operators in semi-Hilbertian spaces is studied. New characterizations are established, and sharp upper and lower bounds for the $q$-numerical radius are derived. Moreover, several inequalities involving operator monotone functions and matrix means for the $q$-numerical radius of sectorial matrices are obtained. 
	\end{abstract}
	
	\maketitle

\section{Introduction and Preliminaries}
Let $\mathcal{H}$ be a complex, separable, infinite-dimensional Hilbert space with inner product $\langle \cdot, \cdot \rangle$, and let $\|\cdot\|$ denote the associated norm. Let $\mathcal{B}(\mathcal{H})$ denote the algebra of bounded linear operators on $\mathcal{H}$. Suppose $A \in \mathcal{B}(\mathcal{H})$ is a positive operator, that is, $\langle Ax, x \rangle \ge 0$ for every $x \in \mathcal{H}$, and assume that $\dim(R(A)) \ge 2$. For any operator $T \in \mathcal{B}(\mathcal{H})$, we denote its range and null space by $R(T)$ and $N(T)$, respectively. Let $P_{\overline{R(A)}}$ be the orthogonal projection onto the closure of $R(A)$.
The operator $A$ induces a semi-inner product on $\mathcal{H}$ defined by
\begin{equation*}
\langle x, y \rangle_A = \langle Ax, y \rangle, \quad x,y \in \mathcal{H},
\end{equation*}
and the corresponding seminorm is given by
\begin{equation*}
\|x\|_A = \sqrt{\langle x, x \rangle_A}.
\end{equation*}
The vector space $\mathcal{H}$, equipped with this semi-inner product, is called a semi-Hilbertian space.
This semi-inner product induces an inner product on the quotient space $\mathcal{H}/N(A)$ given by
\begin{equation*}
[\bar{x}, \bar{y}] = \langle x, y \rangle_A, \quad \bar{x}, \bar{y} \in \mathcal{H}/N(A).
\end{equation*}
The completion of $(\mathcal{H}/N(A), [\cdot, \cdot])$ is isometrically isomorphic to the Hilbert space $\mathcal{R}(A^{1/2}) = (R(A^{1/2}), (\cdot, \cdot))$, where the inner product is defined by
\begin{equation*}
(A^{1/2}x, A^{1/2}y) = \langle P_{\overline{R(A)}}x, P_{\overline{R(A)}}y \rangle, \quad x,y \in \mathcal{H}.
\end{equation*}
Furthermore, for $T \in \mathcal{B}(\mathcal{H})$, if there exists a constant $c > 0$ such that
\begin{equation*}
\|Tx\|_A \le c \|x\|_A \quad \text{for all } x \in \overline{R(A)},
\end{equation*}
then the $A$-operator seminorm of $T$, denoted by $\|T\|_A$, is defined as
\begin{equation*}
\|T\|_A = \sup_{\substack{x \in \overline{R(A)} \ x \ne 0}} \frac{\|Tx\|_A}{\|x\|_A}.
\end{equation*}

Let $\mathcal{B}_{A^{1/2}}(\mathcal{H})$ be the set of all operators admitting $A^{\frac{1}{2}}$-adjoint. By Douglas Theorem \cite{douglus}, we have
\begin{equation*}
	\mathcal{B}_{A^{1/2}}(\mathcal{H}) = \{ T \in \mathcal{B}(\mathcal{H}) : \exists c > 0 \text{ such that } \|Tx\|_A \leq c\|x\|_A \ \forall x \in \mathcal{H} \}.
\end{equation*}
An operator $T \in {B}_{A^{1/2}}(\mathcal{H})$ is known as an $A$-bounded operator, and if $T \in \mathcal{B}_{A^{1/2}}(\mathcal{H})$ then $T(N(A)) \subseteq N(A)$ \cite{sen2024note}. Moreover, $\mathcal{B}_A(\mathcal{H})$ and $\mathcal{B}_{A^{1/2}}(\mathcal{H})$ are sub-algebras of $\mathcal{B(H)}$ and $\mathcal{B}_A(\mathcal{H})\subseteq\mathcal{B}_{A^{1/2}}(\mathcal{H}) \subseteq \mathcal{B(H)}$. Also, $\mathcal{B}_A(\mathcal{H})=\mathcal{B}_{A^{1/2}}(\mathcal{H})= \mathcal{B(H)}$, if $A$ is one-one and $R(A)$ is closed in $\mathcal{H}$.
The $A$-spectral radius of $T$ is defined as
$
r_A(T)=\lim _{n \rightarrow \infty}\left\|T^n\right\|_A^{1 / n}
$\cite[Theorem 1]{feki2020spectral}.

Let $T \in \mathcal{B}(\mathcal{H})$. An element $x \in \mathcal{H}$ is said to be $A$-orthogonal to $y \in \mathcal{H}$ if $\langle x, y \rangle_A = 0$ and this is denoted by $x \perp_A y$. For a subset $\mathcal{W} \subseteq \mathcal{H}$, define

\begin{equation*}
\mathcal{W}^{\perp_A} = \{ x \in \mathcal{H} : \langle x, w \rangle_A = 0 \ \text{for all } w \in \mathcal{W} \}.
\end{equation*}

An operator $W \in \mathcal{B}(\mathcal{H})$ is called an $A$-adjoint of $T$ if
\begin{equation*}
\langle Tx, y \rangle_A = \langle x, Wy \rangle_A \quad \text{for all } x, y \in \mathcal{H}.
\end{equation*}
The class of all operators that admit an $A$-adjoint is denoted by $\mathcal{B}_A(\mathcal{H})$. 
For $T \in \mathcal{B}_A(\mathcal{H})$, the operator equation $AX = T^*A$ admits a unique reduced solution, denoted by $T^{\sharp_A}$, satisfying $R(T^{\sharp_A}) \subseteq R(A)$.

An operator $T \in \mathcal{B}_A(\mathcal{H})$ is called $A$-self-adjoint if $AT$ is self-adjoint.
Note that $A$-self-adjointness of $T$ does not necessarily imply $T = T^{\sharp_A}$. However, one has
$
T = T^{\sharp_A}$ if and only if $T$ is $A$-self-adjoint and $R(T) \subseteq \overline{R(A)}$.
The operator $T$ is said to be $A$-positive, written $T \geq_A 0$, if $AT$ is a positive operator. In particular, every $A$-self-adjoint operator belongs to $\mathcal{B}_A(\mathcal{H})$. Moreover, if $T \in \mathcal{B}_A(\mathcal{H})$ is $A$-self-adjoint, then so is $T^{\sharp_A}$, and
\begin{equation*}
\bigl(T^{\sharp_A}\bigr)^{\sharp_A} = T^{\sharp_A}.
\end{equation*}
An operator $U \in \mathcal{B}_A(\mathcal{H})$ is called $A$-unitary if
\begin{equation*}
\|Ux\|_A = \|U^{\sharp_A}x\|_A = \|x\|A \quad \text{for all } x \in \mathcal{H}.
\end{equation*}
In this case, one has
\begin{equation*}
U^{\sharp_A} U = (U^{\sharp_A}) U^{\sharp_A} = P_{\overline{R(A)}}.
\end{equation*}
Furthermore, if $U$ is $A$-unitary, then so is $U^{\sharp_A}$, and
$
\|U\|_A = \|U^{\sharp_A}\|_A = 1.
$

The concept of the $A$-$q$-numerical range and the $A$-$q$-numerical radius was introduced in \cite{feki2026joint}. In the sequel, we assume that $q\in \mathcal{D}$, where  $\mathcal{D}=\{z \in \mathbb{C}: |z|\le 1\}$ denotes the closed unit disc in the complex plane. For a subspace $\mathcal{V}$ of $\mathcal{H}$, we denote by $\mathbb{S}_{q, A}(\mathcal{V})$ the subset of $\mathcal{V} \times \mathcal{V}$ given by
\begin{equation*}
	\mathbb{S}_{q, A}(\mathcal{V})=\left\{ (x,y)\in \mathcal{V}\times \mathcal{V} : \Vert x \Vert_{A}=\Vert y \Vert_{A}=1 \textup{ and } \langle x, y \rangle_{A}=q \right\}.
\end{equation*}
\begin{definition}
 For $T \in \mathcal{B}_{A^{1 / 2}}(\mathcal{H})$, the $A$-$q$-numerical range $W_{q,A}(T)$ and the $A$-$q$-numerical radius $w_{q,A}(T)$ of $T$, are respectively defined as follows:
\begin{align*}
 W_{q,A}(T)&=\left\{\langle T x, y\rangle_A: x \in \mathcal{H},(x,y) \in \mathbb{S}_{q, A}(\mathcal{H}) \right\},~\text{and}\\ 
 w_{q,A}(T)&=\sup \left\{|\lambda|: \lambda \in W_{q,A}(T)\right\} .
\end{align*} 
\end{definition}	

The above definition provides a unified framework that encompasses several important concepts in the literature. In particular:
\begin{itemize}
    \item[(i)] When $q = 1$, it reduces to the $A$-numerical range.
    \item[(ii)] When $A = I$, it reduces to the $q$-numerical range.
    \item[(iii)] When $A = I$ and $q = 1$, it coincides with the classical numerical range.
\end{itemize}
These concepts have been widely studied in the literature; see, for instance, \cite{rani2025q1,rani2025q,feki2026joint,zamani2019numerical, arias2008partial, stankovic2024some, gau2021numerical, kittaneh2026estimation}.

Let ${M}_n^{+}$ denote the set of all positive matrices.
The matrix mean $A \sigma B$ is a matrix mean on ${M}_n^{+}$ satisfying the following properties \cite{bedrani2021positive}. 
\begin{itemize}
    \item[(i)] If $A \leq C$ and $B \leq D$, then 
    \begin{equation}\label{mean}
        A \sigma B \leq C \sigma D,
    \end{equation}
    for any $A, B, C, D \in {M}_n^{+}$.
    \item[(ii)] For any $A, B \in {M}_n^{+}$ and any invertible $C \in {M}_n$, $C^*(A \sigma B) C=\left(C^* A C\right) \sigma\left(C^* B C\right)$.

    \item[(iii)] If $A_k \downarrow_k A$ and $B_k \downarrow_k B$, then $\left(A_k \sigma B_k\right) \downarrow_k (A \sigma B)$ for any $A_k, B_k, A, B \in {M}_n^{+}$.
    \item[(iv)] ${I}\sigma {I}={I}$.
\end{itemize}

In the work by Dury \cite{drury2015principal}, the matrix geometric mean of two accretive matrices $A$ and $B$ is defined by the following expression,
\begin{equation*}
   A \# B = \left( \frac{2}{\pi}\int_0^\infty (tA+t^{-1}B)^{-1}\frac{dt}{t}\right)^{-1}. 
\end{equation*}
Subsequently, in \cite{raissouli2017relative}, an extension of the aforementioned definition to the weighted geometric mean $A \#_t B$ for $0 < t < 1$ was introduced.
In \cite{bedrani2021positive}, the concept of matrix mean for two accretive matrices $A$ and $B \in M_n$ has been established as follows,
\begin{equation*}
    A \sigma_fB= \int_0^1(A!_sB)dv_f(s),
\end{equation*}
where $A!_sB=((1-s)A^{-1} +sB^{-1})^{-1}, s\in(0,1)$ is the weighted harmonic mean of $A,B$, and $f:(0,\infty) \to (0,\infty)$ is an operator monotone function satisfying $f(1)=1$ and $v_f$ is a probability measure characterizing $\sigma_f$. Furthermore, in \cite{bedrani2021positive}, characterization of the operator monotone function for an accretive matrix was provided. Let $A \in M_n$ be accerative and $f:(0,\infty) \to (0,\infty)$ be an operator monotone function with $f(1)=1$, then
\begin{equation*}
    f(A)=\int_0^1 ((1-s)I+sA^{-1})^{-1}dv_f(s),
\end{equation*}
where $v_f$ is the probability measure satisfying $f(x)=\int_0^1((1-s)+sx^{-1})^{-1}dv_f(s)$.
The logarithmic mean of accretive matrices $A$, $B$ was defined as follows, 
\begin{equation*}
    \mathcal{L}(A,B) =\int_0^1 A\#_tB dt
\end{equation*}
\cite{tan2019logarithmic}, where $A\#_tB$ is the weighted geometric mean of the accritive matrices $A,B$ defined in \cite{raissouli2017relative}. In \cite{mao2020inequalities}, the Heinz mean is defined as
\begin{equation*}
    \mathcal{H}_t(A,B)=\frac{A \#_tB+A \#_{1-t}B}{2}, ~~0 < t < 1.
\end{equation*}
For more information, one can refer to \cite{raissouli2017relative,bedrani2021positive,lin2014singular,lin2016some}.
Let $\mathcal{F}$ denotes the collection of all functions $f$, $f:(0, \infty) \to (0, \infty)$ is an operator monotone function and $f(1)=1$. 

Let $A \in M_n$, then $A$ is said to be sectorial if $W(A)$ is a subset of a sector $S_\alpha$ for some $\alpha \in \mathclose{[}0,\frac{\pi}{2}\mathopen{)}$  where
\begin{equation*}
	S_{\alpha}= \{ z \in \mathbb{C} : \mathcal{R} z >0 , |\mathcal{I}z| \le \tan(\alpha)(\mathcal{R} z)\}.
\end{equation*}
The class of all $n \times n$ sectorial matrices where $W(T) \subseteq S_{\alpha}$ is denoted by $\prod_{s,\alpha}^n$. The significance of sectorial matrices arises from their numerical ranges falling within a specific sector in the right half of the complex plane. This class of matrices, particularly useful in stability analysis, becomes an essential tool for comprehending and studying the behavior of dynamical systems. Researchers have extensively explored the characteristics of the numerical radius and numerical range of sectorial matrices due to their wide-ranging applications.
For a detailed review, one may refer the articles \cite{alakhrass2021sectorial,bedrani2021numerical,sammour2022geometric,alakhrass2020note}. The following results are important. 
\begin{lemma}\cite{bedrani2021positive}\label{normr}
    If $ A \in \prod_{s,\alpha}^n$, $f \in \mathcal{F}$, we have
    \begin{equation*}
   f(\|\mathcal{R}(A)\|) \le \|\mathcal{R}f(A)\|\le \sec^2(\alpha) f(\|\mathcal{R}(A)\|).    
    \end{equation*}
\end{lemma}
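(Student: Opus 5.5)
Here $\mathcal{R}(X)=(X+X^*)/2$ denotes the real part. The plan rests on two standard facts about sectorial matrices. First, for $A\in\prod_{s,\alpha}^n$ and $f\in\mathcal{F}$, the matrix $f(A)$ is again in $\prod_{s,\alpha}^n$ (Bedrani–Kittaneh–Sababheh). Second, every $T\in\prod_{s,\alpha}^n$ satisfies $\mathcal{R}(T^{-1})\le \sec^2(\alpha)\,(\mathcal{R}T)^{-1}$ and $(\mathcal{R}T)^{-1}\le \mathcal{R}(T^{-1})$. The second fact comes from writing $T=\mathcal{R}T^{1/2}(I+iK)\mathcal{R}T^{1/2}$ with $K$ Hermitian and $\|K\|\le\tan\alpha$. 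Then
\[
\mathcal{R}(T^{-1})=\mathcal{R}T^{-1/2}(I+K^2)^{-1}\mathcal{R}T^{-1/2},
\]
and $\cos^2\alpha\, I\le (I+K^2)^{-1}\le I$.

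\textbf{Lower bound.} Apply the integral representation
\[
f(A)=\int_0^1 ((1-s)I+sA^{-1})^{-1}\,dv_f(s).
\]
Taking real parts is linear, so
\[
\mathcal{R}f(A)=\int_0^1 \mathcal{R}\big[((1-s)I+sA^{-1})^{-1}\big]\,dv_f(s).
\]
For each $s$, the matrix $T_s=(1-s)I+sA^{-1}$ is sectorial with the same angle, since $A^{-1}$ is. The inequality $(\mathcal{R}T)^{-1}\le\mathcal{R}(T^{-1})$ then gives
\[
\mathcal{R}(T_s^{-1})\ge ((1-s)I+s\,\mathcal{R}(A^{-1}))^{-1}.
\]
Using $\mathcal{R}(A^{-1})\le\sec^2\alpha\,(\mathcal{R}A)^{-1}$ would lose a factor, so for the lower bound I will instead compare norms. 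Choose a unit vector $x$ with $\langle \mathcal{R}A\,x,x\rangle=\|\mathcal{R}A\|$. Then
\[
\|\mathcal{R}f(A)\|\ge \mathcal{R}\langle f(A)x,x\rangle.
\]
Next I will use the known scalar-type inequality $\mathcal{R}\langle f(A)x,x\rangle\ge f(\langle\mathcal{R}A\,x,x\rangle)$. This follows because $\langle T_s^{-1}x,x\rangle$ has real part at least $1/\mathcal{R}\langle T_s x,x\rangle$, by Cauchy–Schwarz for accretive operators. Moreover, $\mathcal{R}\langle A^{-1}x,x\rangle\le \cdots$ requires the reverse direction, so this step needs care.

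The cleaner route is to deduce the lower bound from $f(\mathcal{R}A)\le\mathcal{R}f(A)$, a known Löwner-type inequality for sectorial $A$ and operator monotone $f$ (proved in \cite{bedrani2021positive}). It gives
\[
\|\mathcal{R}f(A)\|\ge\|f(\mathcal{R}A)\|=f(\|\mathcal{R}A\|),
\]
since $f$ is increasing and $\mathcal{R}A>0$.

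\textbf{Upper bound.} Establish $\mathcal{R}f(A)\le\sec^2\alpha\, f(\mathcal{R}A)$, which yields
\[
\|\mathcal{R}f(A)\|\le\sec^2\alpha\, f(\|\mathcal{R}A\|).
\]
Start from
\[
\mathcal{R}(T_s^{-1})\le\sec^2\alpha\,(\mathcal{R}T_s)^{-1}=\sec^2\alpha\,((1-s)I+s\,\mathcal{R}(A^{-1}))^{-1}.
\]
Then use $\mathcal{R}(A^{-1})\ge(\mathcal{R}A)^{-1}$ together with the fact that inversion is order reversing to get
\[
\mathcal{R}(T_s^{-1})\le\sec^2\alpha\,((1-s)I+s(\mathcal{R}A)^{-1})^{-1}.
\]
Integrating against $dv_f$ gives $\sec^2\alpha\, f(\mathcal{R}A)$.

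\textbf{Main obstacle.} The hard step is the lower bound $f(\mathcal{R}A)\le\mathcal{R}f(A)$. The pointwise chain above produces inequalities in the wrong direction once $\mathcal{R}(A^{-1})$ appears. I would first try to handle it via the real-part inequality applied to the operator monotone function $g(x)=x/f(x)$. Failing that, I would work at the level of norms with an extremal vector $x$, using the concavity of $f$ and the scalar integral representation.
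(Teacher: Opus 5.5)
Your second ``standard fact'' has $\mathcal{R}(T^{-1})$ and $(\mathcal{R}T)^{-1}$ swapped, and your upper bound relies on the false half. Your own formula $\mathcal{R}(T^{-1})=(\mathcal{R}T)^{-1/2}(I+K^2)^{-1}(\mathcal{R}T)^{-1/2}$, together with $\cos^2\alpha\, I\le (I+K^2)^{-1}\le I$, gives
\[
\cos^2\alpha\,(\mathcal{R}T)^{-1}\le \mathcal{R}(T^{-1})\le (\mathcal{R}T)^{-1},
\]
not $(\mathcal{R}T)^{-1}\le\mathcal{R}(T^{-1})\le\sec^2\alpha\,(\mathcal{R}T)^{-1}$. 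The scalar $T=1+i$ (so $\alpha=\pi/4$) already gives $\mathcal{R}(T^{-1})=1/2<1=(\mathcal{R}T)^{-1}$. Hence the step $\mathcal{R}(A^{-1})\ge(\mathcal{R}A)^{-1}$ in your upper bound is false. So is the displayed $\mathcal{R}(T_s^{-1})\ge((1-s)I+s\,\mathcal{R}(A^{-1}))^{-1}$ in your lower-bound attempt. The upper bound survives if you spend the factor $\sec^2\alpha$ in the other step:
\[
\mathcal{R}(T_s^{-1})\le(\mathcal{R}T_s)^{-1}\le\bigl((1-s)I+s\cos^2\alpha\,(\mathcal{R}A)^{-1}\bigr)^{-1}\le\sec^2\alpha\,\bigl((1-s)I+s(\mathcal{R}A)^{-1}\bigr)^{-1}.
\]
The middle step uses $\mathcal{R}(A^{-1})\ge\cos^2\alpha\,(\mathcal{R}A)^{-1}$. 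The last step uses $(1-s)I+s\cos^2\alpha\,X\ge\cos^2\alpha\bigl((1-s)I+sX\bigr)$ for $X\ge 0$. Integrating against $dv_f$ gives $\mathcal{R}f(A)\le\sec^2\alpha\,f(\mathcal{R}A)$, and hence the right-hand norm inequality.

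Your lower bound is not proved; you only cite $f(\mathcal{R}A)\le\mathcal{R}f(A)$, which is half of Lemma \ref{normr2}. That citation is legitimate here. The paper gives no proof of the statement and imports it from \cite{bedrani2021positive}. Both halves follow in one line from Lemma \ref{normr2}, because $0<X\le Y$ implies $\|X\|\le\|Y\|$ and $\|f(\mathcal{R}A)\|=f(\|\mathcal{R}A\|)$ for increasing $f$. But then your from-scratch upper bound is superfluous. If you want a self-contained argument, note that your ``main obstacle'' is largely an artifact of the reversed inequality. With the correct directions the naive chain gives $\mathcal{R}(T_s^{-1})\ge\cos^2\alpha\,(\mathcal{R}T_s)^{-1}\ge\cos^2\alpha\bigl((1-s)I+s(\mathcal{R}A)^{-1}\bigr)^{-1}$, losing only $\cos^2\alpha$. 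The missing idea for the sharp bound is to write, for $0<s<1$,
\[
T_s^{-1}=A\bigl((1-s)A+sI\bigr)^{-1}=\frac{1}{1-s}\Bigl(I-s\bigl((1-s)A+sI\bigr)^{-1}\Bigr).
\]
Then apply $\mathcal{R}(X^{-1})\le(\mathcal{R}X)^{-1}$, which holds for any accretive $X$ by the same $(I+K^2)^{-1}\le I$ argument, to $X=(1-s)A+sI$. This yields $\mathcal{R}(T_s^{-1})\ge\frac{1}{1-s}\bigl(I-s((1-s)\mathcal{R}A+sI)^{-1}\bigr)=\bigl((1-s)I+s(\mathcal{R}A)^{-1}\bigr)^{-1}$; the endpoints $s=0,1$ are equalities. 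Integrating gives $f(\mathcal{R}A)\le\mathcal{R}f(A)$, and hence $\|\mathcal{R}f(A)\|\ge f(\|\mathcal{R}A\|)$. Neither of your fallback ideas is needed.
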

\begin{lemma}\cite{bedrani2021positive}\label{normr2}
    If $ A \in \prod_{s,\alpha}^n$, $f \in \mathcal{F}$, we have
    \begin{equation*}
   f(\mathcal{R}(A)) \le \mathcal{R}(f(A))\le \sec^2(\alpha) f(\mathcal{R}(A)).    
    \end{equation*}
\end{lemma}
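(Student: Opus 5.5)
The plan is to derive Lemma~\ref{normr2} from the integral representation of $f$ on accretive matrices recalled above,
\begin{equation*}
f(A)=\int_0^1\bigl((1-s)I+sA^{-1}\bigr)^{-1}\,dv_f(s),
\end{equation*}
together with the scalar formula $f(x)=\int_0^1((1-s)+sx^{-1})^{-1}dv_f(s)$. Taking real parts is linear and commutes with the integral. Hence it suffices to prove, for each fixed $s\in[0,1]$, the two-sided inequality
\begin{equation*}
\bigl((1-s)I+s\,\mathcal{R}(A)^{-1}\bigr)^{-1}\le \mathcal{R}\Bigl(\bigl((1-s)I+sA^{-1}\bigr)^{-1}\Bigr)\le \sec^2(\alpha)\bigl((1-s)I+s\,\mathcal{R}(A)^{-1}\bigr)^{-1}.
\end{equation*}
Integrating against the probability measure $v_f$ then gives $f(\mathcal{R}(A))\le \mathcal{R}f(A)\le \sec^2(\alpha)f(\mathcal{R}(A))$, since the left- and right-hand integrands integrate to $f(\mathcal{R}(A))$ by functional calculus for the positive matrix $\mathcal{R}(A)$.

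The pointwise inequality rests on two standard facts about a sectorial matrix $T\in\prod_{s,\alpha}^n$.

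\emph{Fact (a).} The inverse satisfies $T^{-1}\in\prod_{s,\alpha}^n$, because $\langle T^{-1}y,y\rangle=\overline{\langle Tx,x\rangle}$ with $y=Tx$. Moreover, $\mathcal{R}(T^{-1})\le \mathcal{R}(T)^{-1}$.

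\emph{Fact (b).} The reverse inequality $\mathcal{R}(T)^{-1}\le\sec^2(\alpha)\,\mathcal{R}(T^{-1})$ holds. Equivalently, $\mathcal{R}(T^{-1})\ge\cos^2(\alpha)\,\mathcal{R}(T)^{-1}$.

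Both facts follow from writing $T=R^{1/2}(I+iK)R^{1/2}$, where $R=\mathcal{R}(T)>0$ and $K$ is Hermitian with $\|K\|\le\tan\alpha$. Then
\begin{equation*}
\mathcal{R}(T^{-1})=R^{-1/2}(I+K^2)^{-1}R^{-1/2}.
\end{equation*}
Since $I\le I+K^2\le\sec^2(\alpha) I$, this gives $\cos^2(\alpha)R^{-1}\le\mathcal{R}(T^{-1})\le R^{-1}$.

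Now apply these facts with $T=(1-s)I+sA^{-1}$. By (a), $A^{-1}$ is sectorial, so $T$ is sectorial with the same $\alpha$.

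\emph{Lower bound.} Applying (a) to $T$ gives $\mathcal{R}(T^{-1})\ge \cos^2(\alpha)\mathcal{R}(T)^{-1}$. That is the wrong direction for the lower bound, so I handle it differently. By Fact (a) applied to $A$, $\mathcal{R}(A^{-1})\le \mathcal{R}(A)^{-1}$, hence $\mathcal{R}(T)\le (1-s)I+s\mathcal{R}(A)^{-1}$. Operator monotonicity of inversion then gives
\begin{equation*}
\mathcal{R}(T)^{-1}\ge\bigl((1-s)I+s\mathcal{R}(A)^{-1}\bigr)^{-1}.
\end{equation*}
Combining with (a) applied to $T$, $\mathcal{R}(T^{-1})\le\mathcal{R}(T)^{-1}$, does not close the argument, so the lower bound needs care. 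I will instead use the block/variational characterization $\langle \mathcal{R}(T^{-1})y,y\rangle$ versus harmonic-mean concavity. The cleanest route is the known fact that the map $T\mapsto \mathcal{R}(T^{-1})^{-1}$ equals the parallel-sum-type expression $R+KR^{-1}\dots$ in the $R^{1/2}$-normalized form. Concretely, $\mathcal{R}(T^{-1})^{-1}=R^{1/2}(I+K^2)R^{1/2}$, and the lower bound on $f$ is equivalent to the convexity of $X\mapsto \mathcal{R}(X^{-1})^{-1}$-type quantities. I expect this lower bound to be the main obstacle, and if it resists I will appeal to Lemma~\ref{normr}'s source \cite{bedrani2021positive}, where the identical argument is carried out.

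\emph{Upper bound.} Applying (b) to $T$, and then the monotone-inversion estimate in the opposite direction, gives $\mathcal{R}(T^{-1})\le\sec^2(\alpha)\cdots$ provided $\mathcal{R}(T)\ge \cos^2(\alpha)\bigl((1-s)I+s\mathcal{R}(A)^{-1}\bigr)$. This follows from (b) applied to $A$ together with $1-s\ge\cos^2(\alpha)(1-s)$.

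The details here need checking: the factor $\sec^2\alpha$ must appear only once, and to ensure that I will combine the estimates at the level of the $I+K^2$ bounds rather than chaining them.
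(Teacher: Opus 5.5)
Your reduction to the integrand $((1-s)I+sA^{-1})^{-1}$ via the integral representation is the right framework, but the lower bound $f(\mathcal{R}(A))\le\mathcal{R}(f(A))$ is left unproved, and your Facts (a)--(b) cannot supply it. With $T=(1-s)I+sA^{-1}$, chaining them gives at best $\mathcal{R}(T^{-1})\ge\cos^2(\alpha)\,\mathcal{R}(T)^{-1}\ge\cos^2(\alpha)\bigl((1-s)I+s\,\mathcal{R}(A)^{-1}\bigr)^{-1}$, i.e.\ only $\cos^2(\alpha)f(\mathcal{R}(A))\le\mathcal{R}(f(A))$. The claimed bound has constant $1$ for every accretive $A$, so it must come from a structural inequality, not from the sector estimate $\|K\|\le\tan\alpha$. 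Falling back on \cite{bedrani2021positive} is what the paper itself does, since it gives no proof of this lemma. As a proof, however, your argument stops at this point.

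A direct fix is as follows. For $s\in(0,1)$ and a vector $x$, put $H=((1-s)I+sA^{-1})^{-1}$, $u=Hx$, $y=(1-s)u$ and $z=sA^{-1}u$. Then $y+z=x$ and
\begin{equation*}
\langle\mathcal{R}(H)x,x\rangle=\mathcal{R}\langle u,y+z\rangle=\tfrac{1}{1-s}\|y\|^2+\tfrac{1}{s}\langle\mathcal{R}(A)z,z\rangle\ge\bigl\langle\bigl((1-s)I+s\,\mathcal{R}(A)^{-1}\bigr)^{-1}x,x\bigr\rangle.
\end{equation*}
The inequality is the parallel-sum formula $\langle(P^{-1}+Q^{-1})^{-1}x,x\rangle=\min_{y'+z'=x}\bigl(\langle Py',y'\rangle+\langle Qz',z'\rangle\bigr)$ for $P,Q>0$, applied with $P=(1-s)^{-1}I$ and $Q=s^{-1}\mathcal{R}(A)$.

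Alternatively, your convexity hint can be made precise. From your normal form, $\mathcal{R}(X^{-1})^{-1}=\mathcal{R}(X)+\mathcal{I}(X)\,\mathcal{R}(X)^{-1}\mathcal{I}(X)$. This is convex in accretive $X$ by joint convexity of $(P,Q)\mapsto QP^{-1}Q$ (via Schur complements). Evaluating at $X=I$ and $X=A^{-1}$ with weights $1-s$ and $s$, and then inverting, gives the same pointwise bound.

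Your upper bound is correct, but the labels are swapped. The step $\mathcal{R}(T^{-1})\le\mathcal{R}(T)^{-1}$ is Fact (a) applied to $T$; Fact (b) applied to $T$ points the wrong way. Fact (b) applied to $A$ then gives $\mathcal{R}(T)\ge\cos^2(\alpha)\bigl((1-s)I+s\,\mathcal{R}(A)^{-1}\bigr)$. After inverting, $\sec^2(\alpha)$ enters exactly once, so no further bookkeeping is needed.
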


 For unitarily invariant norm, we have 
\begin{equation}\label{function}
    |||f(A+B)||| \le |||f(A)+f(B)|||,~\text{where}~ A,B >0, f \in \mathcal{F}
\end{equation} \cite{ando1999norm}.
Moreover, one can note that $q$-numerical radius is not a unitarily invariant norm.

\section{$A$-$q$-Numerical Radius Estimations}

Consider $q \in \mathbb{C}$ with $|q|\le 1$ and $x \in\mathcal{H}$ with $\|x\|_A=1$. For any $z \in \mathcal{H}$ satisfying $\langle x,z \rangle_A=0$ and $\|z\|_A=1$, let $y=\overline{q}x+\sqrt{1-|q|^2}z$. This construction ensures $\|y\|_A=1$ and $\langle x,y \rangle_A=q$. Conversely, for any $y \in \mathcal{H}$ with $\|y\|_A=1$ and $\langle x,y \rangle_A=q$, set $z=\frac{1}{\sqrt{1-|q|^2}}(y-\overline{q}x)$, resulting in $\langle x,z \rangle_A=0$ and $\|z\|_A=1$. Thus, there exists a one-to-one correspondence between such a $z$ and $y$. Before proceeding to the next result, one can note that $\Re_A(T)$ and $\Im_A(T)$ are $ A$-self-adjoint operators.
	\begin{lemma}\label{la-normal}\cite{rani2025q1}
	If $T\in \mathcal{B}_A({\mathcal{H}})$, $T$ is $A$-self-adjoint operator and $q \in \mathcal{D}$, then 
	\begin{equation}\label{ea-normal}
		|q| w_A(T) \le w_{q,A}(T) \le w_A(T).
	\end{equation}
\end{lemma}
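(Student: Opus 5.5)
The plan is to prove the two inequalities in \eqref{ea-normal} separately, treating $w_A(T)$ as the $A$-numerical radius (the case $q=1$ of $w_{q,A}$). The key fact for an $A$-self-adjoint $T$ is that $w_A(T)=\|T\|_A$, since $AT$ is self-adjoint. More precisely, $w_A(T)=\sup\{|\langle Tx,x\rangle_A|:\|x\|_A=1\}$ equals $\sup\{|\langle Tx,y\rangle_A|:\|x\|_A=\|y\|_A=1\}$, because the sesquilinear form $(x,y)\mapsto\langle Tx,y\rangle_A$ is Hermitian. I would derive this by the standard polarization argument for Hermitian forms, working in the semi-inner product exactly as in the Hilbert space case.

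\textbf{Upper bound.} Take $(x,y)\in\mathbb{S}_{q,A}(\mathcal{H})$. The $A$-Cauchy--Schwarz inequality and the Hermitian-form fact above give
\[
|\langle Tx,y\rangle_A|\le \|T\|_A=w_A(T).
\]
Taking the supremum over all such pairs yields $w_{q,A}(T)\le w_A(T)$.

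\textbf{Lower bound.} Here I would use the parametrization from the paragraph preceding the lemma. Fix $x$ with $\|x\|_A=1$ and choose $z$ with $\|z\|_A=1$ and $z\perp_A x$; such a $z$ exists because $\dim R(A)\ge 2$. Put
\[
y=\overline{q}x+\sqrt{1-|q|^2}\,z .
\]
Then $(x,y)\in\mathbb{S}_{q,A}(\mathcal{H})$ and
\[
\langle Tx,y\rangle_A = q\langle Tx,x\rangle_A+\sqrt{1-|q|^2}\,\langle Tx,z\rangle_A .
\]
Replacing $z$ by $e^{i\theta}z$ keeps $(x,y)$ admissible and lets us rotate the second term freely. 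We can therefore choose $\theta$ so that the two terms have aligned phases, or so that the second term vanishes in the worst case. This gives
\[
|\langle Tx,y\rangle_A|\ge |q|\,|\langle Tx,x\rangle_A| .
\]
Taking the supremum over $x$ gives $w_{q,A}(T)\ge |q|\,w_A(T)$. Note that this lower bound does not even need self-adjointness.

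\textbf{Main obstacle.} The step I expect to be hardest is justifying $w_A(T)=\|T\|_A$ in the semi-Hilbertian setting, i.e., handling vectors that may lie partly in $N(A)$. I would first reduce to $x,y\in\overline{R(A)}$, using that $\langle Tx,y\rangle_A$ depends only on $P_{\overline{R(A)}}x$ and $P_{\overline{R(A)}}y$, since $T(N(A))\subseteq N(A)$. Then I would apply the polarization identity
\[
4\,\Re\langle Tx,y\rangle_A=\langle T(x+y),x+y\rangle_A-\langle T(x-y),x-y\rangle_A ,
\]
which holds because $AT$ is self-adjoint, and finish with the parallelogram law for $\|\cdot\|_A$.
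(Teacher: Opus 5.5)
Your proof is correct. The paper gives no argument of its own for this lemma; it imports it from \cite{rani2025q1}, so there is no internal proof to compare against.

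Your route is the natural one in this framework. The upper bound comes from $w_{q,A}(T)\le\sup\{|\langle Tx,y\rangle_A|:\|x\|_A=\|y\|_A=1\}=\|T\|_A=w_A(T)$. The lower bound comes from the parametrization $y=\overline{q}x+\sqrt{1-|q|^2}\,z$, which is exactly the correspondence the paper sets up just before the lemma, together with a phase rotation of $z$. Your argument also isolates the identity $w_A(T)=\|T\|_A$ for $A$-self-adjoint $T$. The paper relies on this identity implicitly when it later writes $|q|\|\Re_A(T)\|_A\le w_{q,A}(\Re_A(T))$. Your remark that the lower bound holds for every $T$ is right. The upper bound genuinely needs self-adjointness: for $A=I$, $T=\begin{bmatrix}0&1\\0&0\end{bmatrix}$ and $x=e_2$ one gets $w_q(T)\ge\sqrt{1-|q|^2}>\tfrac12=w(T)$ whenever $|q|<\sqrt3/2$.

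\textbf{Wording in the lower bound.} Drop the phrase about choosing $\theta$ ``so that the second term vanishes'': a rotation cannot make a nonzero term vanish. The correct case split is this. If either term is zero, $|\langle Tx,y\rangle_A|\ge|q|\,|\langle Tx,x\rangle_A|$ is immediate. Otherwise, align the phases of the two terms.

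\textbf{Your ``main obstacle'' is lighter than you expect.} The reduction to $\overline{R(A)}$ is unnecessary.
\begin{itemize}
\item If $\|u\|_A=0$, then $A^{1/2}u=0$, so $Au=0$ and $\langle Tu,u\rangle_A=\langle Tu,Au\rangle=0$.
\item Hence $|\langle Tu,u\rangle_A|\le w_A(T)\|u\|_A^2$ holds for every $u\in\mathcal H$.
\item The polarization identity and the parallelogram law for $\|\cdot\|_A$ then give $\Re\langle Tx,y\rangle_A\le w_A(T)$ for all $A$-unit $x,y$, with no reduction needed.
\item Replacing $y$ by $e^{i\phi}y$ upgrades $\Re$ to the modulus.
\end{itemize}
This yields $w_{q,A}(T)\le w_A(T)$ directly, without passing through $\|T\|_A$.
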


The following result was recently obtained in \cite{rani2025q1}. Here, an alternative proof is provided using the $A$-Cartesian decomposition.
\begin{theorem}\label{s1t4.1}
	Let $T\in \mathcal{B}_A({\mathcal{H}})$ and $q \in \mathcal{D}\setminus \{0\}$. Then
	\begin{equation}\label{e1}
		\frac{|q|}{2}\|T\|_A\le w_{q,A}(T)\le \|T\|_A.
	\end{equation}
	
\end{theorem}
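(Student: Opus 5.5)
We prove the two inequalities separately. The lower bound goes through the $A$-Cartesian decomposition together with Lemma \ref{la-normal}; the upper bound is a direct Cauchy--Schwarz estimate.

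\emph{Upper bound.} Take $(x,y)\in\mathbb{S}_{q,A}(\mathcal{H})$. The Cauchy--Schwarz inequality for the semi-inner product $\langle\cdot,\cdot\rangle_A$ gives
\begin{equation*}
|\langle Tx,y\rangle_A|\le \|Tx\|_A\|y\|_A\le \|T\|_A\|x\|_A=\|T\|_A .
\end{equation*}
Taking the supremum over all such pairs yields $w_{q,A}(T)\le\|T\|_A$. We use here that $\|Tx\|_A\le\|T\|_A\|x\|_A$ for every $x\in\mathcal{H}$, not only for $x\in\overline{R(A)}$. This holds because $T(N(A))\subseteq N(A)$ and $\|\cdot\|_A$ vanishes on $N(A)$, so we may replace $x$ by $P_{\overline{R(A)}}x$ without changing either side.

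\emph{Lower bound.} Write
\begin{equation*}
T=\Re_A(T)+i\,\Im_A(T),\qquad \Re_A(T)=\tfrac12\bigl(T+T^{\sharp_A}\bigr),\quad \Im_A(T)=\tfrac1{2i}\bigl(T-T^{\sharp_A}\bigr).
\end{equation*}
Both parts are $A$-self-adjoint, so Lemma \ref{la-normal} gives
\begin{equation*}
|q|\,w_A(\Re_A(T))\le w_{q,A}(\Re_A(T)),\qquad |q|\,w_A(\Im_A(T))\le w_{q,A}(\Im_A(T)).
\end{equation*}
Next we need
\begin{equation*}
w_{q,A}(\Re_A(T))\le w_{q,A}(T)\quad\text{and}\quad w_{q,A}(\Im_A(T))\le w_{q,A}(T).
\end{equation*}
The natural route is
\begin{equation*}
w_{q,A}(\Re_A(T))\le \tfrac12\bigl(w_{q,A}(T)+w_{q,A}(T^{\sharp_A})\bigr),
\end{equation*}
combined with the symmetry $w_{q,A}(T^{\sharp_A})=w_{q,A}(T)$. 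This symmetry follows from
\begin{equation*}
\langle T^{\sharp_A}x,y\rangle_A=\overline{\langle Ty,x\rangle_A}.
\end{equation*}
Swapping the roles of $x$ and $y$ turns $\langle x,y\rangle_A=q$ into $\langle y,x\rangle_A=\bar q$. So this actually gives $w_{q,A}(T^{\sharp_A})=w_{\bar q,A}(T)$. We then observe that $w_{\bar q,A}=w_{q,A}$: replace $(x,y)$ by $(\lambda x,\lambda y)$ with $|\lambda|=1$, and use the correspondence $y=\bar q x+\sqrt{1-|q|^2}z$ described above together with a conjugation argument on the span of $x$ and $z$. A cleaner alternative that avoids this issue is to use the identity $w_A(\Re_A(e^{i\theta}T))$ and pass through the known fact $\sup_\theta w_A(\Re_A(e^{i\theta}T))=w_A(T)$. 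Once these bounds are in place, the triangle inequality finishes the argument:
\begin{equation*}
\|T\|_A\le \|\Re_A(T)\|_A+\|\Im_A(T)\|_A = w_A(\Re_A(T))+w_A(\Im_A(T))\le \tfrac{2}{|q|}\,w_{q,A}(T).
\end{equation*}
The middle equality holds because the $A$-numerical radius equals the $A$-seminorm for $A$-self-adjoint operators.

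The main obstacle is the comparison $w_{q,A}(\Re_A(T))\le w_{q,A}(T)$, specifically the equality $w_{q,A}(T^{\sharp_A})=w_{q,A}(T)$, which may hold only with $\bar q$ in place of $q$. If the conjugation argument fails, I would fall back on a direct estimate. For $A$-unit $x\in\overline{R(A)}$, choose $z\perp_A x$ with $\|z\|_A=1$, set $y=\bar q x+\sqrt{1-|q|^2}\,z$, and obtain
\begin{equation*}
\langle Tx,y\rangle_A=q\langle Tx,x\rangle_A+\sqrt{1-|q|^2}\langle Tx,z\rangle_A .
\end{equation*}
Then optimize over $z$ and over the sign of $z$ to bound $|q|\,|\langle Tx,x\rangle_A|$ by $w_{q,A}(T)$. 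This yields $|q|\,w_A(T)\le w_{q,A}(T)$, and the classical inequality $\|T\|_A\le 2w_A(T)$ then gives the lower bound directly.
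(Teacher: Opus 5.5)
Your primary route is the paper's route. It uses the $A$-Cartesian decomposition, Lemma \ref{la-normal} for $\Re_A(T)$ and $\Im_A(T)$, the identity $w_A(S)=\|S\|_A$ for $A$-self-adjoint $S$, and the comparison $w_{q,A}(\Re_A(T)),\,w_{q,A}(\Im_A(T))\le w_{q,A}(T)$, which the paper only calls ``easy to check.''

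Your reduction of that comparison to $w_{q,A}(T^{\sharp_A})=w_{\bar q,A}(T)$ is correct. Your justification of $w_{\bar q,A}(T)=w_{q,A}(T)$ is not: replacing $(x,y)$ by $(\lambda x,\lambda y)$ with $|\lambda|=1$ leaves $\langle x,y\rangle_A$ unchanged. No conjugation argument is needed; rotate $y$ alone. If $(x,y)\in\mathbb{S}_{q,A}(\mathcal{H})$ and $c=q/\bar q$, then $\langle x,cy\rangle_A=\bar c\,q=\bar q$ and $|\langle Tx,cy\rangle_A|=|\langle Tx,y\rangle_A|$. Hence $w_{q,A}(T)$ depends only on $|q|$. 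Your ``cleaner alternative'' through $\sup_\theta w_A(\Re_A(e^{i\theta}T))=w_A(T)$ does not avoid the issue, because it controls $w_A$ of the real parts, not $w_{q,A}$. With this one-line fix your main route is complete.

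Your fallback is correct and is a genuinely different, cleaner argument. Set $y_\pm=\bar q x\pm\sqrt{1-|q|^2}\,z$. Both pairs $(x,y_\pm)$ lie in $\mathbb{S}_{q,A}(\mathcal{H})$, and $q\langle Tx,x\rangle_A=\tfrac12\bigl(\langle Tx,y_+\rangle_A+\langle Tx,y_-\rangle_A\bigr)$. So $|q|\,w_A(T)\le w_{q,A}(T)$ holds for \emph{every} $T$. The existence of $z$ uses the standing assumption $\dim R(A)\ge 2$, and for $|q|=1$ you can take $y=\bar q x$ directly.

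This shows the lower bound in Lemma \ref{la-normal} does not need $A$-self-adjointness. Combined with $\|T\|_A\le 2w_A(T)$ from \cite{zamani2019numerical}, it proves the theorem while bypassing both Lemma \ref{la-normal} and the comparison step. The price is citing Zamani's inequality, which is itself proved by the Cartesian decomposition, so both routes rest on the same mechanism. Yours isolates the $q$-dependence in one more general inequality.

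Your upper-bound argument is also correct. It includes the extension of $\|Tx\|_A\le\|T\|_A\|x\|_A$ from $\overline{R(A)}$ to all of $\mathcal{H}$ via $T(N(A))\subseteq N(A)$, a step the paper leaves implicit.
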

\begin{proof}
	By Cartesian decomposition of $T$, we have
	\begin{equation*}
		\|T\|_A\le \|\Re_A(T)\|_A+\|\Im_A(T)\|_A.
	\end{equation*}
	Since $\Re_A(T)$ and $\Im_A(T)$ both are $A$-self-adjoint operators, Lemma \ref{la-normal} gives us
	\begin{equation*}
		\|T\|_A	\le \frac{1}{|q|}\left( w_{q,A}(\Re_A(T))+w_{q,A}(\Im_A(T))\right).
	\end{equation*} 	
	It is easy to check that $w_{q,A}(\Re_A(T))\le w_{q,A}(T)$ and $w_{q,A}(\Im_A(T))\le w_{q,A}(T)$. Therefore,
	\begin{equation*}
		\|T\|_A\le \frac{2}{|q|}w_{q,A}(T).
	\end{equation*}
\end{proof}
\begin{remark}
 If we take $q=1$ in inequality \eqref{e1}, we have
$
        \frac{1}{2}\|T\|_A \le w_{A}(T) \le \|T\|_A,
$
 which is same as corollary 2.8 \cite{zamani2019numerical}.   
 If we take $A=I$ and $q \in (0,1)$ in Theorem \ref{s1t4.1}, we have
			\begin{equation}\label{qin}
			\frac{q}{2}\|T\|\le w_{q}(T)\le \|T\|,
		\end{equation}
        which is obtained in \cite{stankovic2025some}.

\end{remark}
Since, 
\begin{equation}\label{eq2.1}
    \frac{\|T\|}{2}\le w(T) \le \|T\|.
\end{equation}
An improvement of the inequalities in \eqref{eq2.1} has been given in \cite{kittaneh2005numerical}. It says that for $T \in B(\mathcal{H})$, we have
\begin{equation}\label{k5}
	\frac{1}{4}\|T^*T+TT^*\| \le w^2(T) \le \frac{1}{2} \|T^*T+TT^*\|.
\end{equation}	
In our forthcoming result, we will generalize \eqref{k5} to $A$-$q$-numerical radius of $T$, where $q \in \mathcal{D}\setminus \{0\}$.
\begin{theorem}\label{nt4.25}
	Let $T \in \mathcal{B}_A(\mathcal{H})$ and $q \in \mathcal{D}\setminus \{0\}$. Then
	\begin{equation}\label{eq1.16}
		\frac{|q|^2}{4}	\|T^\#T+TT^\#\|_A  \le w_{q,A}^2(T) \le \frac{(2-|q|^2+4|q|\sqrt{1-|q|^2})}{2}\|TT^\#+T^\#T\|_A.
	\end{equation}
\end{theorem}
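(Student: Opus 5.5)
Both inequalities will be proved by reducing to the $A$-self-adjoint parts $\Re_A(T)=\frac{T+T^{\sharp_A}}{2}$ and $\Im_A(T)=\frac{T-T^{\sharp_A}}{2i}$, in the same spirit as the proof of Theorem \ref{s1t4.1}. The algebraic input is the identity
\begin{equation*}
\Re_A(T)^2+\Im_A(T)^2=\frac{T^{\sharp_A}T+TT^{\sharp_A}}{2},
\end{equation*}
which holds on $\overline{R(A)}$ modulo the projection $P_{\overline{R(A)}}$. This is harmless for $A$-seminorms, since $\|S\|_A$ only sees $A^{1/2}S$. Throughout, $T^\#$ is read as $T^{\sharp_A}$.

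\textbf{Lower bound.} For $A$-self-adjoint $S$ we have $\|S\|_A=w_A(S)$, and the left half of Lemma \ref{la-normal} gives $|q|\,w_A(S)\le w_{q,A}(S)$. As noted in the proof of Theorem \ref{s1t4.1}, $w_{q,A}(\Re_A T)\le w_{q,A}(T)$ and $w_{q,A}(\Im_A T)\le w_{q,A}(T)$. Hence
\begin{equation*}
\tfrac12\|T^{\sharp_A}T+TT^{\sharp_A}\|_A=\|\Re_A(T)^2+\Im_A(T)^2\|_A\le \|\Re_A T\|_A^2+\|\Im_A T\|_A^2\le \frac{2}{|q|^2}w_{q,A}^2(T),
\end{equation*}
which is exactly $\frac{|q|^2}{4}\|T^{\sharp_A}T+TT^{\sharp_A}\|_A\le w_{q,A}^2(T)$.

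\textbf{Upper bound.} Fix $x$ with $\|x\|_A=1$ and take $y=\overline q x+\sqrt{1-|q|^2}\,z$ with $z\perp_A x$ and $\|z\|_A=1$, using the correspondence described before Lemma \ref{la-normal}. Then
\begin{equation*}
\langle Tx,y\rangle_A = q\langle Tx,x\rangle_A+\sqrt{1-|q|^2}\,\langle Tx,z\rangle_A .
\end{equation*}
\begin{itemize}
\item Expanding $|\langle Tx,y\rangle_A|^2$ gives the diagonal terms $|q|^2|\langle Tx,x\rangle_A|^2+(1-|q|^2)|\langle Tx,z\rangle_A|^2$ plus a cross term bounded by $2|q|\sqrt{1-|q|^2}\,|\langle Tx,x\rangle_A||\langle Tx,z\rangle_A|$.
\item For the first diagonal piece, write $\langle Tx,x\rangle_A=\langle \Re_A T x,x\rangle_A+i\langle \Im_A T x,x\rangle_A$. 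Cauchy--Schwarz gives $|\langle Tx,x\rangle_A|^2\le \langle (\Re_A T)^2x,x\rangle_A+\langle(\Im_AT)^2x,x\rangle_A\le \frac12\|T^{\sharp_A}T+TT^{\sharp_A}\|_A$.
\item For the second diagonal piece, the natural move is $|\langle Tx,z\rangle_A|\le \|Tx\|_A$ with $\|Tx\|_A^2=\langle T^{\sharp_A}Tx,x\rangle_A\le\|T^{\sharp_A}T+TT^{\sharp_A}\|_A$, since $TT^{\sharp_A}\ge_A0$.
\item For the cross term, bound $|\langle Tx,x\rangle_A|\,|\langle Tx,z\rangle_A|\le \|Tx\|_A^2\le \|T^{\sharp_A}T+TT^{\sharp_A}\|_A$.
\end{itemize}
Summing, the coefficient of $\|TT^{\sharp_A}+T^{\sharp_A}T\|_A$ is $\frac{|q|^2}{2}+(1-|q|^2)+2|q|\sqrt{1-|q|^2}=\frac{2-|q|^2+4|q|\sqrt{1-|q|^2}}{2}$, exactly the constant in \eqref{eq1.16}. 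Taking the supremum over $x$ and $z$ finishes the argument.

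\textbf{Main obstacle.} This is the bookkeeping in the upper bound: each of the three pieces must be matched to the right estimate so that the coefficients add up to precisely the stated constant. I also need to justify the semi-Hilbertian facts used:
\begin{itemize}
\item $\langle T^{\sharp_A}Tx,x\rangle_A=\|Tx\|_A^2$;
\item positivity of $A T^{\sharp_A}T$ and of $A TT^{\sharp_A}$, so that either summand is dominated by the sum in $A$-order;
\item $\|S\|_A=\sup_{\|x\|_A=1}\langle Sx,x\rangle_A$ for $A$-positive $S$.
\end{itemize}
All three follow from $AT^{\sharp_A}=T^*A$ and the standard facts about $\mathcal{B}_A(\mathcal{H})$ recalled in the introduction.
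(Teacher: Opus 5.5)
Your proof is correct. The lower bound is the paper's argument verbatim in substance. For the upper bound you keep the paper's skeleton: write $y=\overline{q}x+\sqrt{1-|q|^2}\,z$ and bound the two diagonal pieces and the cross term separately. The estimates themselves differ, though.

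\textbf{The paper's route.} It applies the $A$-Cartesian decomposition to $T^{\sharp_A}$ in all three pieces and bounds everything by $a^2+b^2$, where $a=\|\Re_A(T^{\sharp_A})x\|_A$ and $b=\|\Im_A(T^{\sharp_A})x\|_A$. It pays a factor $2$ through $(a+b)^2\le 2(a^2+b^2)$ in the $z$-term and the cross term. It then has to transfer back to $T$, via $\|(TT^{\sharp_A}+T^{\sharp_A}T)^{\sharp_A}\|_A=\|TT^{\sharp_A}+T^{\sharp_A}T\|_A$ and the equality $w_{q,A}(T^{\sharp_A})=w_{q,A}(T)$, which it uses only implicitly.

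\textbf{Your route.} You work with $T$ itself and use the Cartesian decomposition only for $\langle Tx,x\rangle_A$. The other two pieces you handle with $\|Tx\|_A^2=\langle T^{\sharp_A}Tx,x\rangle_A\le\langle (T^{\sharp_A}T+TT^{\sharp_A})x,x\rangle_A$. This rests on $ATT^{\sharp_A}=(T^{\sharp_A})^*AT^{\sharp_A}\ge 0$, which follows from $(T^{\sharp_A})^*A=(T^*A)^*=AT$. Discarding the $TT^{\sharp_A}$ summand costs exactly the same factor $2$, which is why the two constants coincide.

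\textbf{What yours buys.} It is shorter and avoids the detour through $T^{\sharp_A}$. It also makes the slack visible. In the cross term, pair $|\langle Tx,x\rangle_A|^2\le\frac12\|T^{\sharp_A}T+TT^{\sharp_A}\|_A$ with $|\langle Tx,z\rangle_A|\le\|Tx\|_A$. That alone lowers the term $4|q|\sqrt{1-|q|^2}$ in the numerator of the constant to $2\sqrt2\,|q|\sqrt{1-|q|^2}$.

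\textbf{Two small remarks.} First, $\Re_A(T)^2+\Im_A(T)^2=\frac12(T^{\sharp_A}T+TT^{\sharp_A})$ is an exact algebraic identity once $\Re_A(T)=\frac12(T+T^{\sharp_A})$ and $\Im_A(T)=\frac{1}{2i}(T-T^{\sharp_A})$, so no caveat about $P_{\overline{R(A)}}$ is needed. Second, the third semi-Hilbertian fact you list is unnecessary: $\langle Sx,x\rangle_A\le\|S\|_A\|x\|_A^2$ follows directly from Cauchy--Schwarz for any $S\in\mathcal{B}_{A^{1/2}}(\mathcal{H})$.
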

\begin{proof}
	Since $\Re_A(T)$ and $\Im_A(T)$ are $A$-self adjoint operators, by Lemma \ref{la-normal}, we have $|q|\|\Re_A(T)\|_A \le w_{q,A}(\Re_A(T))$ and $|q|\|\Im_A(T)\|_A \le w_{q,A}(\Im_A(T))$. Therefore,
	\begin{eqnarray*}
		\|\Re_A(T)\|_A \le \frac{1}{|q|}w_{q,A}(T),~\text{and}~ \|\Im_A(T)\|_A \le \frac{1}{|q|}w_{q,A}(T).
	\end{eqnarray*}
	Now,
	\begin{align*}
		\|T^\#T+TT^\#\|_A=&2\|\Re_A(T)^2+\Im_A(T)^2\|_A\\
		\le & 2\left(\|\Re_A(T)\|^2+\|\Im_A(T)\|^2\right)\\
        \le & 2 \left(\frac{1}{|q|^2}w_{q,A}^2(T)+ \frac{1}{|q|^2}w_{q,A}^2(T)\right).
	\end{align*}
	Thus we have,
	\begin{equation*}
		\frac{|q|^2}{4}\|TT^\#+T^\# T\|_A
		 \le w_{q,A}^2(T).
	\end{equation*}
	Now, 
	\begin{align*}
		|\langle T^\#x,y \rangle_A|^2=&| \langle (\Re_A(T^\#)+i \Im_A(T^\#))x,y \rangle_A |^2 \\
		\le & |\langle \Re_A(T^\#)x,y \rangle_A+i \langle \Im_A(T^\#)x,y \rangle_A|^2.
	\end{align*}
 
Taking $y=\bar{q}x+\sqrt{1-|q|^2}z$, we have
\begin{align*}
		&|\langle T^\#x,y \rangle_A|^2 \le  |\langle \Re_A(T^\#)x,\bar{q}x+\sqrt{1-|q|^2}z \rangle_A+i \langle \Im_A(T^\#)x,\bar{q}x+\sqrt{1-|q|^2}z \rangle_A|^2\\
		\le & \left( |q| \left| \langle (\Re_A(T^\#)+i\Im_A(T^\#))x,x \rangle_A \rangle_A\right|+\sqrt{1-|q|^2}\left| \langle \Re_A(T^\#)x,z \rangle_A+i \langle \Im_A(T^\#)x,z \rangle_A\right| \right)^2\\
		=&|q|^2\left| \langle (\Re_A(T^\#)+i\Im_A(T^\#))x,x \rangle_A\right|^2+(1-|q|^2)\left| \langle \Re_A(T^\#)x,z \rangle_A+i \langle \Im_A(T^\#)x,z \rangle_A\right|^2\\
		+ &2|q|\sqrt{1-|q|^2}\left| \langle \Re_A(T^\#)x,x \rangle_A+i \langle \Im_A(T^\#)x,x \rangle_A\right|\left| \langle \Re_A(T^\#)x,z \rangle_A+i \langle \Im_A(T^\#)x,z \rangle_A\right|\\
 \le & |q|^2	\left( \langle (\Re_A(T^\#)+i\Im_A(T^\#))x,x \rangle_A \right)\overline{\left( (\Re_A(T^\#)+i\Im_A(T^\#))x,x \rangle_A \right)}	\\
 +&(1-|q|^2)\left| \langle \Re_A(T^\#)x,z \rangle_A+i \langle \Im_A(T^\#)x,z \rangle_A\right|^2\\
 + &2|q|\sqrt{1-|q|^2}\left| \langle \Re_A(T^\#)x,x \rangle_A+i \langle \Im_A(T^\#)x,x \rangle_A\right|\left| \langle \Re_A(T^\#)x,z \rangle_A+i \langle \Im_A(T^\#)x,z \rangle_A\right|\\
 \le&|q|^2 \left( \left| \langle \Re(T^\# x,x \rangle \right|^2+\left| \langle \Im(T^\# x,x \rangle \right|^2 \right)\\
 +&(1-|q|^2)\left(\left| \langle \Re_A(T^\#)x,z \rangle_A\right|+\left| \langle \Im_A(T^\#)x,z \rangle_A\right|\right)^2\\
 + &2|q|\sqrt{1-|q|^2}\left(\left| \langle \Re_A(T^\#)x,x \rangle_A \right|+\left| \langle \Im_A(T^\#)x,x \rangle_A\right|\right) \left(\left| \langle \Re_A(T^\#)x,z \rangle_A \right|+\left| \langle \Im_A(T^\#)x,z \rangle_A\right|\right)\\
  \end{align*}
  Using Cauchy Schwarz's inequality, we have
  \begin{align*}
|\langle T^\#x,y \rangle_A|^2 \le& |q|^2\left( \| \Re_A(T^\#)x\|^2+\| \Im_A(T^\#)x\|^2\right)\\
+&(1-|q|^2) \left(\| \Re_A(T^\#)x\|+ \| \Im_A(T^\#)x\| \right)^2\\
+&2|q|\sqrt{1-|q|^2}\left(\| \Re_A(T^\#)x\|+ \| \Im_A(T^\#)x\| \right)^2\\
\le & \left(|q|^2+2-2|q|^2+2|q|\sqrt{1-|q|^2}\right)\left( \| \Re_A(T^\#)x\|^2+\| \Im_A(T^\#)x\|^2\right).
	\end{align*}		
Thus, we have
\begin{align*}
	|\langle T^\#x,y \rangle_A|^2		
		=&(2-|q|^2+4|q|\sqrt{1-|q|^2})\frac{\|(T^\#)^\# T^\#+T^\#(T^\#)^\#\|}{2}\\
		=&(2-|q|^2+4|q|\sqrt{1-|q|^2})\frac{ \|\left(T T^\#+T^\#T\right)^\#\|}{2}\\
			=&(2-|q|^2+4|q|\sqrt{1-|q|^2})\frac{ \|T T^\#+T^\#T\|}{2}.
	\end{align*}
Last inequality follows from $\|T\|=\|T^\#\|$.	Hence the result.
\end{proof}

\begin{remark}
If $A=I$ and $q=1$, then we get the well known inequality \eqref{k5}.
Take $q=1$ in \eqref{eq1.16}, we have
 \begin{equation}
		\frac{1}{4}	\|T^\#T+TT^\#\|_A  \le w_{A}^2(T) \le \frac{1}{2}\|TT^\#+T^\#T\|_A.
	\end{equation}  
\end{remark}

Based on the theorem mentioned above, we can derive the subsequent significant result.
\begin{corollary}
	Let $T \in \mathcal{B}_A(\mathcal{H})$, $q \in \mathcal{D}$ and $w_{q,A}^2(T)=\frac{|q|^2}{4}\|T^\#T+TT^\#\|_A$. Then 
	$$|q|^2\|\Re_A(e^{i\theta}T)\|_A^2=|q|^2\|\Im_A(e^{i\theta}T)\|_A^2=\frac{|q|^2}{4}\|T^\#T+TT^\#\|_A$$ for all $\theta \in \mathbb{R}$.
\end{corollary}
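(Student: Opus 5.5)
The plan is to read off the corollary from the chain of inequalities in the proof of Theorem \ref{nt4.25}, applied to the rotated operator $e^{i\theta}T$, and to observe that equality at the two ends of the chain forces equality at every step in between.

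First, the invariances. For every $\theta\in\mathbb{R}$ we have $w_{q,A}(e^{i\theta}T)=w_{q,A}(T)$, since $\langle e^{i\theta}Tx,y\rangle_A=e^{i\theta}\langle Tx,y\rangle_A$. We also have $(e^{i\theta}T)^\#=e^{-i\theta}T^\#$, so
\begin{equation*}
(e^{i\theta}T)^\#(e^{i\theta}T)+(e^{i\theta}T)(e^{i\theta}T)^\#=T^\#T+TT^\#.
\end{equation*}
Hence the hypothesis $w_{q,A}^2(T)=\frac{|q|^2}{4}\|T^\#T+TT^\#\|_A$ holds equally for $e^{i\theta}T$, and it suffices to treat $\theta=0$ with $T$ replaced by $e^{i\theta}T$. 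I will assume $q\neq 0$. If $q=0$, both sides of the claimed identity vanish, so there is nothing to prove.

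Next, write $R=\Re_A(e^{i\theta}T)$ and $S=\Im_A(e^{i\theta}T)$. Both are $A$-self-adjoint, and $R^2+S^2=\frac12(T^\#T+TT^\#)$. From the proof of Theorem \ref{nt4.25}, i.e.\ Lemma \ref{la-normal} together with $w_{q,A}(R),w_{q,A}(S)\le w_{q,A}(T)$, we get the chain
\begin{equation*}
\tfrac{1}{2}\|T^\#T+TT^\#\|_A=\|R^2+S^2\|_A\le \|R\|_A^2+\|S\|_A^2\le \tfrac{2}{|q|^2}w_{q,A}^2(T).
\end{equation*}
By hypothesis the right end equals $\frac12\|T^\#T+TT^\#\|_A$, so every inequality in the chain is an equality. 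In particular
\begin{equation*}
\|R\|_A^2+\|S\|_A^2=\tfrac{2}{|q|^2}w_{q,A}^2(T).
\end{equation*}
Since individually $\|R\|_A^2\le \frac{1}{|q|^2}w_{q,A}^2(T)$ and $\|S\|_A^2\le \frac{1}{|q|^2}w_{q,A}^2(T)$, a sum of two terms, each bounded by $c$, can equal $2c$ only if both equal $c$. Therefore
\begin{equation*}
\|R\|_A^2=\|S\|_A^2=\tfrac{1}{|q|^2}w_{q,A}^2(T)=\tfrac14\|T^\#T+TT^\#\|_A.
\end{equation*}
Multiplying by $|q|^2$ gives exactly the claimed identities, for every $\theta$.

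The step needing most care is justifying the individual bounds $\|\Re_A(T)\|_A\le \frac{1}{|q|}w_{q,A}(T)$, which the earlier proof labelled "easy to check". For this I would use Lemma \ref{la-normal}: for $A$-self-adjoint $R$, $\|R\|_A=w_A(R)\le \frac{1}{|q|}w_{q,A}(R)$. It then remains to show $w_{q,A}(\Re_A(T))\le w_{q,A}(T)$. This should follow by writing
\begin{equation*}
\langle \Re_A(T)x,y\rangle_A=\tfrac12\bigl(\langle Tx,y\rangle_A+\overline{\langle Ty,x\rangle_A}\bigr),
\end{equation*}
noting that $(y,x)\in\mathbb{S}_{\bar q,A}(\mathcal{H})$, and using that $w_{\bar q,A}(T)=w_{q,A}(T)$. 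I would accept this as established in the paper, since the statement explicitly relies on it.
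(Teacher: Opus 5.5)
Your proposal is correct and follows essentially the paper's argument: the paper runs the same chain $\frac{|q|^2}{2}\|R^2+S^2\|_A\le\frac{|q|^2}{2}(\|R\|_A^2+\|S\|_A^2)\le\frac12\bigl(w_{q,A}^2(R)+w_{q,A}^2(S)\bigr)\le w_{q,A}^2(T)$ for $R=\Re_A(e^{i\theta}T)$, $S=\Im_A(e^{i\theta}T)$, and lets the hypothesis force equality throughout. You also make explicit three points the paper leaves implicit: the trivial case $q=0$, why equality of the sum forces equality of each term, and the bound $w_{q,A}(\Re_A(T))\le w_{q,A}(T)$, which you justify via $w_{\bar q,A}=w_{q,A}$.
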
	
\begin{proof}
	By simple calculations, we obtain that
	$$(\Re_A(e^{i\theta}T))^2+(\Im_A(e^{i\theta}T))^2=\frac{TT^\#+T^\#T}{2},$$ where $\theta \in \mathbb{R}$. 
	Now,
	\begin{align*}
		\frac{|q|^2}{4}\|T^\#T+TT^\#\|_A
		=&\frac{|q|^2}{2}\|	(\Re_A(e^{i\theta}T))^2+(\Im_A(e^{i\theta}T))^2\|_A\\
		\le & \frac{|q|^2}{2}(\|	(\Re_A(e^{i\theta}T))^2\|_A+\|(\Im_A(e^{i\theta}T))^2\|_A)\\
  \le & \frac{|q|^2}{2}(\|	(\Re_A(e^{i\theta}T))\|_A^2+\|(\Im_A(e^{i\theta}T))\|_A^2)\\
		\le & \frac{1}{2}(w_{q,A}^2(\Re_A(e^{i\theta}T))+w_{q,A}^2(\Im_A(e^{i\theta}T)))\\
		\le & \frac{1}{2}(w_{q,A}^2(e^{i\theta}T)+w_{q,A}^2(e^{i\theta}T))\\
		=&\frac{1}{2}(w_{q,A}^2(T)+w_{q,A}^2(T))\\
  =&w_{q,A}(T)\\
		=& \frac{|q|^2}{4}\|T^\#T+TT^\#\|_A.
	\end{align*}
	Thus, $$|q|^2\|\Re_A(e^{i\theta}T)\|_A^2=|q|^2\|\Im_A(e^{i\theta}T)\|_A^2=w_{q,A}^2(T)=\frac{|q|^2}{4}\|T^\#T+TT^\#\|_A$$ for all $\theta \in \mathbb{R}$.
\end{proof}

\begin{theorem}\label{th1.15}
	Let $T \in \mathcal{B}_A(\mathcal{H})$ and $q \in \mathcal{D}$. Then
	\begin{equation*}
		w_{q,A}(T)\leq(|q|^2w_A^2(T)+ 
		\left((1-|q|^2)\|T\|_A^2+2|q| \sqrt{1-|q|^2}w_A(T)\|T\|_A\right)^{\frac{1}{2}}.
	\end{equation*}
\end{theorem}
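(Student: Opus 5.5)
The plan is to work directly from the parametrization of $\mathbb{S}_{q,A}(\mathcal{H})$ given at the start of this section. Fix $x\in\mathcal{H}$ with $\|x\|_A=1$ and $y$ with $\|y\|_A=1$, $\langle x,y\rangle_A=q$. When $|q|<1$, write $y=\bar q x+\sqrt{1-|q|^2}\,z$ with $\langle x,z\rangle_A=0$ and $\|z\|_A=1$. The case $|q|=1$ is trivial: then $y=\bar q x$, so $w_{q,A}(T)=w_A(T)$, which is exactly the right-hand side.

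By sesquilinearity,
\begin{equation*}
\langle Tx,y\rangle_A = q\langle Tx,x\rangle_A+\sqrt{1-|q|^2}\,\langle Tx,z\rangle_A .
\end{equation*}
Next, take absolute values, apply the triangle inequality, and square:
\begin{align*}
|\langle Tx,y\rangle_A|^2 \le{}& |q|^2|\langle Tx,x\rangle_A|^2+(1-|q|^2)|\langle Tx,z\rangle_A|^2\\
&+2|q|\sqrt{1-|q|^2}\,|\langle Tx,x\rangle_A|\,|\langle Tx,z\rangle_A| .
\end{align*}
Each term is then bounded separately:
\begin{itemize}
\item $|\langle Tx,x\rangle_A|\le w_A(T)$, by the definition of the $A$-numerical radius, since $\|x\|_A=1$.
\item $|\langle Tx,z\rangle_A|\le \|Tx\|_A\|z\|_A\le \|T\|_A$, by the Cauchy--Schwarz inequality for the semi-inner product $\langle\cdot,\cdot\rangle_A$ together with $\|Tx\|_A\le\|T\|_A\|x\|_A$.
\end{itemize}
The inequality $\|Tx\|_A\le\|T\|_A\|x\|_A$ holds for all $x\in\mathcal{H}$, not only for $x\in\overline{R(A)}$, because $T\in\mathcal{B}_A(\mathcal{H})\subseteq\mathcal{B}_{A^{1/2}}(\mathcal{H})$ leaves $N(A)$ invariant, so we may replace $x$ by $P_{\overline{R(A)}}x$ without changing either side. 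This gives
\begin{equation*}
|\langle Tx,y\rangle_A|^2\le |q|^2w_A^2(T)+(1-|q|^2)\|T\|_A^2+2|q|\sqrt{1-|q|^2}\,w_A(T)\|T\|_A .
\end{equation*}
The bound is uniform over all admissible pairs $(x,y)$, so taking the supremum and then the square root yields the stated estimate. (The parentheses in the statement appear unbalanced; the intended right-hand side is the square root of this whole sum.)

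There is no real obstacle here. The only point needing care is justifying $\|Tx\|_A\le\|T\|_A$ for every $x$ with $\|x\|_A=1$, since $\|T\|_A$ is defined as a supremum over $\overline{R(A)}$ only. This is handled by the invariance $T(N(A))\subseteq N(A)$ noted in the introduction, which gives $A^{1/2}Tx=A^{1/2}TP_{\overline{R(A)}}x$.
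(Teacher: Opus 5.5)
Your proof is correct and follows the paper's argument essentially line by line: parametrize $y=\bar q x+\sqrt{1-|q|^2}\,z$, expand, apply the triangle inequality and square, then bound the terms by $w_A(T)$ and $\|T\|_A$. Your separate treatment of $|q|=1$ and your justification of $\|Tx\|_A\le\|T\|_A\|x\|_A$ for all $x$ via $T(N(A))\subseteq N(A)$ are details the paper omits. In the $|q|=1$ case, $y=\bar q x$ holds only modulo $N(A)$, but this suffices because $\|y-\bar q x\|_A=0$.
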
 
\begin{proof}
	Consider,
	\begin{align*}
		|\langle Tx,y \rangle_A|^2=&|\langle Tx,\overline{q}x+ \sqrt{1-|q|^2}z \rangle_A|^2\\
		=&|\overline{q} \langle Tx,x \rangle_A + \sqrt{1-|q|^2} \langle Tx,z \rangle_A|^2\\
		\le & (|\overline{q} \langle Tx,x \rangle_A| +| \sqrt{1-|q|^2} \langle Tx,z \rangle_A|)^2\\
		=& |q|^2 | \langle Tx,x \rangle_A|^2 +(1-|q|^2)|\langle Tx,z \rangle_A|^2+2|q|\sqrt{1-|q|^2}| \langle Tx,x \rangle_A|| \langle Tx,z \rangle_A|\\
		\le & |q|^2 |\langle Tx,x \rangle_A|^2 +(1-|q|^2)|\langle Tx,z \rangle_A|^2+2|q| \sqrt{1-|q|^2}  |\langle Tx,x \rangle_A|\|T\|_A\|x\|_A\|z\|_A.
	\end{align*}
	This implies, 
	\begin{align*}
		|\langle Tx,y \rangle_A|^2 
		\le& |q|^2w_A^2(T)+ 
		(1-|q|^2)\|T\|_A^2+2|q| \sqrt{1-|q|^2}w_A(T)\|T\|_A.
	\end{align*}
	Taking supremum over $x$ and $y$ with $\|x\|=\|y\|=1$ and $\langle x,y \rangle =q$, we have 	
	\begin{equation*}
		w_{q,A}(T)\leq(|q|^2w_A^2(T)+ 
		\left((1-|q|^2)\|T\|_A^2+2|q| \sqrt{1-|q|^2}w_A(T)\|T\|_A\right)^{\frac{1}{2}}.
	\end{equation*}
	
	Hence the result.
\end{proof}
\begin{remark}
Take $A=I$ in Theorem \ref{th1.15}, we have
\begin{equation}\label{qe1}
		w_{q}(T)\leq(|q|^2w^2(T)+ 
		\left((1-|q|^2)\|T\|^2+2|q| \sqrt{1-|q|^2}w(T)\|T\|\right)^{\frac{1}{2}}.
	\end{equation}
\end{remark}

We can note that $w_{q,A}(.)$ is not multiplicative, namely the equality 
\begin{equation}\label{in4.7}
	w_{q,A}(TS) = w_{q,A}(T)w_{q,A}(S), ~ q \in \mathcal{D}\setminus \{0\}
\end{equation}
is not valid in general. To see this, consider
$T=\begin{bmatrix}
	0 & 1 \\
	1 & 0 
\end{bmatrix}$ and $S=\begin{bmatrix}
	2 & 0 \\
	0 & 2 
\end{bmatrix}$. Then $w_{q,A}(T)=|q|,w_{q,A}(S)=2|q|$ and $w_{q,A}(TS)=2|q|$, where $q \in \mathcal{D}$. Clearly, $w_q(TS) \ne w_q(T)w_q(S)$ if $q \in \mathcal{D}\setminus \{0,1\}$. Here, $T$ and $S$ commutes. It is worthy to note that even if $T$ and $S$ commutes, equality (\ref{in4.7}) does not hold true. While it holds true that $w(TS) \le 4w(T)w(S)$, it is important to note that $w_{q,A}(TS) \not \le 4w_{q,A}(T)w_{q,A}(S)$ for all $q \in \mathcal{D}\setminus \{0,1\}$.  	
\begin{theorem}
	Let $T \in \mathcal{B}_A(\mathcal{H})$ and $S \in \mathcal{B}_A(\mathcal{H})$ and $q \in \mathcal{D}\setminus \{0\}$. Then
	\begin{equation*}
		|q|^2w_{q,A}(TS)\le 4w_{q,A}(T)w_{q,A}(S). 
	\end{equation*}
\end{theorem}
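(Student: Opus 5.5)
We chain three facts: the upper bound of Theorem \ref{s1t4.1}, submultiplicativity of the $A$-seminorm, and the lower bound of Theorem \ref{s1t4.1} applied to each factor.

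First, since $TS\in\mathcal{B}_A(\mathcal{H})$ (the class $\mathcal{B}_A(\mathcal{H})$ is a subalgebra), the right-hand inequality in \eqref{e1} gives
\begin{equation*}
w_{q,A}(TS)\le \|TS\|_A .
\end{equation*}

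Second, we use that the $A$-operator seminorm is submultiplicative on $\mathcal{B}_{A^{1/2}}(\mathcal{H})$. For $x$ with $\|x\|_A\le 1$,
\begin{equation*}
\|TSx\|_A\le \|T\|_A\|Sx\|_A\le \|T\|_A\|S\|_A .
\end{equation*}
Here the first step uses $\|Tu\|_A\le\|T\|_A\|u\|_A$ for every $u\in\mathcal{H}$. This holds for all $u\in\mathcal{H}$, not only $u\in\overline{R(A)}$: writing $u=P_{\overline{R(A)}}u+v$ with $v\in N(A)$, we have $\|u\|_A=\|P_{\overline{R(A)}}u\|_A$. Moreover $T(N(A))\subseteq N(A)$ for $A$-bounded $T$, so $\|Tu\|_A=\|TP_{\overline{R(A)}}u\|_A$. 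Hence
\begin{equation*}
\|TS\|_A\le\|T\|_A\|S\|_A .
\end{equation*}

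Third, we apply the left-hand inequality of \eqref{e1} to $T$ and to $S$ separately, which is legitimate since $q\neq 0$:
\begin{equation*}
\|T\|_A\le \frac{2}{|q|}\,w_{q,A}(T), \qquad \|S\|_A\le \frac{2}{|q|}\,w_{q,A}(S).
\end{equation*}

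Chaining the three steps yields
\begin{equation*}
w_{q,A}(TS)\le \frac{4}{|q|^2}\,w_{q,A}(T)\,w_{q,A}(S),
\end{equation*}
and multiplying through by $|q|^2$ gives the claim.

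The only point requiring care is the second step, namely justifying $\|Tu\|_A\le \|T\|_A\|u\|_A$ for arbitrary $u$, since the seminorm $\|T\|_A$ was defined as a supremum over $\overline{R(A)}$. This is handled by the $N(A)$-invariance argument above, or equivalently by quoting the standard identity $\|T\|_A=\sup_{\|x\|_A=1}\|Tx\|_A$ over all of $\mathcal{H}$.
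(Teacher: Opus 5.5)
Your proof is correct and follows the same route as the paper: the chain $w_{q,A}(TS)\le\|TS\|_A\le\|T\|_A\|S\|_A$, then the lower bound of Theorem \ref{s1t4.1} applied to each factor. The one addition is your justification, via $T(N(A))\subseteq N(A)$, that $\|Tu\|_A\le\|T\|_A\|u\|_A$ holds for all $u\in\mathcal{H}$; the paper uses this submultiplicativity step without comment.
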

\begin{proof}
	We have,
	\begin{align*}
		w_{q,A}(TS) \le& \|TS\|_A \\
		\le & \|T\|_A\|S\|_A.
	\end{align*}
	By using Theorem \ref{s1t4.1}, we obtain that
	\begin{equation*}
		w_{q,A}(TS)\le \frac{4}{|q|^2}w_{q,A}(T)w_{q,A}(S). 
	\end{equation*}
	Hence, the result.
\end{proof}
If we take $A=I$ and $q=1$ in above theorem then we obtain a well known inequality $w(TS)\le 4w(T)w(S)$.
\section{Realtion between $q$-Numerical Range and Matrix Means for Sectorial Matrices}
Next, we present $q$-numerical radius analog of some known results on operator monotone functions and matrix means.
Norm inequalities and classical numerical radius inequalities for matrix means are studied in \cite{bedrani2021positive} and \cite{bedrani2021numerical} respectively.
In this sub-section, we extend some existing results related to matrix means for $q$-numerical range using the results obtained in the previous sub-section.  In the next theorem, we give some relations between $f(A)$ and $w_q(A)$, where $f \in \mathcal{F}$. In particular, we present $q$-numerical radius version of inequality (\ref{function}) for sectorial matrices. Also, if $A$ is sectorial then $f(A)$ is also sectorial \cite[Corollary 3.4]{bedrani2021numerical}. The following results is useful in this direction.
\begin{lemma}\label{co1.6}\cite{rani2025q}
    If $T \in \mathcal{B(H)}$ and $q \in \mathcal{D}$, then we have 
		\begin{itemize}
			\item [(a)] $|q|\|\mathcal{R}(T)\| \le w_q(\mathcal{R}(T))\le \|\mathcal{R}(T)\|$,
			\item [(b)]  $|q|\|\mathcal{I}(T)\| \le w_q(\mathcal{I}(T))\le \|\mathcal{I}(T)\|.$
		\end{itemize}
\end{lemma}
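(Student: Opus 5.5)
Both parts follow one argument: $\mathcal{R}(T)=\frac{T+T^*}{2}$ and $\mathcal{I}(T)=\frac{T-T^*}{2i}$ are self-adjoint, so it is enough to prove that every self-adjoint $S\in\mathcal{B(H)}$ satisfies
\begin{equation*}
|q|\,\|S\| \le w_q(S)\le \|S\|,
\end{equation*}
and then apply this with $S=\mathcal{R}(T)$ and $S=\mathcal{I}(T)$. A quicker route is to take $A=I$ in Lemma \ref{la-normal}, which gives $|q|w(S)\le w_q(S)\le w(S)$, and then use $w(S)=\|S\|$ for self-adjoint $S$. I would still write out the direct argument below so that the proof does not depend on the cited lemma.

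\emph{Upper bound.} Take $\|x\|=\|y\|=1$ with $\langle x,y\rangle=q$. By Cauchy--Schwarz,
\begin{equation*}
|\langle Sx,y\rangle|\le \|Sx\|\,\|y\|\le \|S\|.
\end{equation*}
Taking the supremum over all such pairs gives $w_q(S)\le\|S\|$. This step needs nothing beyond boundedness.

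\emph{Lower bound.} If $q=0$ there is nothing to prove, so assume $q\neq 0$. Because $S$ is self-adjoint, its spectrum lies in the approximate point spectrum and contains a point $\lambda$ with $|\lambda|=\|S\|$. Choose unit vectors $x_n$ with $\|Sx_n-\lambda x_n\|\to 0$. Since $\dim\mathcal H\ge 2$, for each $n$ pick a unit vector $z_n\perp x_n$. Set
\begin{equation*}
y_n=\bar q x_n+\sqrt{1-|q|^2}\,z_n,
\end{equation*}
so that $\|y_n\|=1$ and $\langle x_n,y_n\rangle=q$, as in the correspondence at the start of Section 2. Then
\begin{equation*}
\langle Sx_n,y_n\rangle=q\langle Sx_n,x_n\rangle+\sqrt{1-|q|^2}\,\langle Sx_n-\lambda x_n,z_n\rangle .
\end{equation*}
Here we used $\langle x_n,z_n\rangle=0$ to insert $-\lambda x_n$ in the second term. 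That second term tends to $0$, and $\langle Sx_n,x_n\rangle\to\lambda$. Hence $\langle Sx_n,y_n\rangle\to q\lambda$, and so $w_q(S)\ge |q|\,|\lambda|=|q|\,\|S\|$.

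The main obstacle is the lower bound. The natural first attempt is to maximize $|\langle Sx,x\rangle|$ and then attach some $y$, but the cross term $\sqrt{1-|q|^2}\langle Sx,z\rangle$ can cancel the main term. The fix is to use approximate eigenvectors: the orthogonality of $z_n$ and $x_n$ then makes the cross term vanish in the limit. Self-adjointness is what guarantees an approximate eigenvalue of modulus $\|S\|$, and that is exactly where the hypothesis on $\mathcal{R}(T)$ and $\mathcal{I}(T)$ is used.
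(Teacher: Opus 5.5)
Your proof is correct. The paper itself gives no proof of this lemma; it imports it from \cite{rani2025q}. The only route visible inside the paper is the one you call the quicker route: take $A=I$ in Lemma \ref{la-normal}, which gives $|q|w(S)\le w_q(S)\le w(S)$ for self-adjoint $S$, and combine it with $w(S)=\|S\|$.

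Your direct argument replaces that cited lemma with an elementary spectral fact, and it proves more than is needed. A spectral point $\lambda$ of maximal modulus always lies on $\partial\sigma(S)\subseteq\sigma_{ap}(S)$. Hence the same computation gives $w_q(S)\ge |q|\,r(S)$ for every $S\in\mathcal{B(H)}$. Self-adjointness is therefore used only through $r(S)=\|S\|$, and the two-sided bound $|q|\,\|S\|\le w_q(S)\le\|S\|$ holds for every normaloid (in particular every normal) operator $S$.

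Two small simplifications are available:
\begin{itemize}
\item The splitting of $y_n$ is needed only to show that an admissible $y_n$ exists. For any unit $y_n$ with $\langle x_n,y_n\rangle=q$ one has $|\langle Sx_n,y_n\rangle-q\lambda|=|\langle Sx_n-\lambda x_n,y_n\rangle|\le\|Sx_n-\lambda x_n\|$, so the orthogonality of $z_n$ plays no role in the estimate.
\item In the matrix setting where the paper actually applies the lemma (Section 3), you can take a genuine eigenvector $x$. Then $\langle Sx,y\rangle=\lambda q$ exactly for every admissible $y$, and no limiting argument is required.
\end{itemize}
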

\begin{lemma}\label{normr1}\cite{zhang2015matrix}
 Let  $ A \in \prod_{s,\alpha}^n$ and $|||.|||$ be any unitarily invariant norm on $M_n$. Then
 \begin{equation*}
    \cos({\alpha})|||A|||\le ||| \mathcal{R}(A)||| \le |||A|||. 
 \end{equation*}
\end{lemma}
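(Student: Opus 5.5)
The upper bound is routine; for the lower bound I would pass through a congruence factorization of $A$ and compare everything with the single positive matrix $XX^*$.

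\emph{Upper bound.} Since $\mathcal{R}(A)=\frac{A+A^*}{2}$, the triangle inequality gives $|||\mathcal{R}(A)|||\le \frac{1}{2}(|||A|||+|||A^*|||)$. A unitarily invariant norm depends only on singular values, and $A$ and $A^*$ have the same singular values, so $|||A^*|||=|||A|||$. This gives $|||\mathcal{R}(A)|||\le|||A|||$.

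\emph{Lower bound, step 1 (factorization).} I would use the standard sectorial decomposition: if $W(A)\subseteq S_\alpha$, then $A=XZX^*$ with $X$ invertible and $Z=\mathrm{diag}(e^{i\theta_1},\dots,e^{i\theta_n})$, where $|\theta_j|\le\alpha$. One way to obtain it is to congruence-diagonalize the pair $\mathcal{R}(A)>0$ and $\mathcal{I}(A)$ simultaneously. The condition $W(A)\subseteq S_\alpha$ forces the resulting diagonal entries of $X^{-1}AX^{-*}$ to lie in $S_\alpha$; after rescaling $X$ by their moduli, each has the form $e^{i\theta_j}$ with $|\theta_j|\le\alpha$. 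Then
\begin{equation*}
\mathcal{R}(A)=X\,\mathrm{diag}(\cos\theta_j)\,X^*\ \ge\ \cos(\alpha)\,XX^*.
\end{equation*}
By Weyl monotonicity, $\cos(\alpha)\,\lambda_j(XX^*)\le\lambda_j(\mathcal{R}(A))$ for every $j$. Both matrices are positive, so their eigenvalues are their singular values, and therefore $\cos(\alpha)\,|||XX^*|||\le|||\mathcal{R}(A)|||$.

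\emph{Lower bound, step 2 (main obstacle).} It remains to prove $|||A|||\le|||XX^*|||$, i.e. that the unitary middle factor $Z$ cannot increase the norm. Write the polar form $X=(XX^*)^{1/2}U$ with $U$ unitary. Setting $P=XX^*$ and $V=UZU^*$ (which is unitary), we get
\begin{equation*}
A=P^{1/2}VP^{1/2}.
\end{equation*}
Horn's inequality for singular values of products gives, for each $k$,
\begin{equation*}
\prod_{j\le k}s_j(A)\le\prod_{j\le k}s_j(P^{1/2})\,s_j(V)\,s_j(P^{1/2})=\prod_{j\le k}s_j(P).
\end{equation*}
So $s(A)$ is weakly log-majorized by $s(P)$. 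Weak log-majorization implies weak majorization, and by the Fan dominance theorem this yields $|||A|||\le|||P|||=|||XX^*|||$.

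Combining the two steps gives $\cos(\alpha)\,|||A|||\le|||\mathcal{R}(A)|||$, which together with the upper bound proves the lemma.
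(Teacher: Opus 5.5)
Your proof is correct. The paper gives no proof of its own for this lemma; it is simply quoted from Zhang \cite{zhang2015matrix}, and your argument follows essentially the route of that source. That route is the congruence decomposition $A=XZX^*$ with $Z=\mathrm{diag}(e^{i\theta_1},\dots,e^{i\theta_n})$ and $|\theta_j|\le\alpha$, then the bound $\mathcal{R}(A)\ge\cos(\alpha)\,XX^*$, and finally the estimate $|||XZX^*|||\le|||XX^*|||$, which you obtain cleanly via Horn's inequality, weak log-majorization and Fan dominance.
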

\begin{lemma}\label{realcor}
If $A \in \prod_{s,\alpha}^n$ and $q \in \mathcal{D'}$. Then
\begin{itemize}
    \item [(a)]
    $\cos(\alpha)w_q(A) \le \|\mathcal{R}(A)\|,$

\item [(b)]
$|q| \cos(\alpha)w_q(A) \le w_q(\mathcal{R}(A)).$
\end{itemize}
\end{lemma}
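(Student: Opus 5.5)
Both parts follow by chaining three earlier bounds: the trivial bound $w_q(A)\le \|A\|$, Lemma \ref{normr1} with the operator norm, and Lemma \ref{co1.6}(a).

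\textbf{Part (a).} First, $w_q(A)\le \|A\|$. This is the case $A=I$ of Theorem \ref{s1t4.1}, and it also follows directly from Cauchy--Schwarz: $|\langle Ax,y\rangle|\le \|A\|$ whenever $\|x\|=\|y\|=1$. Second, the spectral (operator) norm is unitarily invariant. Lemma \ref{normr1} applied to it gives $\cos(\alpha)\|A\|\le \|\mathcal{R}(A)\|$. Combining,
\begin{equation*}
\cos(\alpha)\,w_q(A)\le \cos(\alpha)\|A\|\le \|\mathcal{R}(A)\|,
\end{equation*}
which is (a).

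\textbf{Part (b).} Multiply (a) by $|q|\ge 0$ to get $|q|\cos(\alpha)w_q(A)\le |q|\,\|\mathcal{R}(A)\|$. Lemma \ref{co1.6}(a) gives $|q|\,\|\mathcal{R}(A)\|\le w_q(\mathcal{R}(A))$. Chaining the two inequalities yields (b). Lemma \ref{co1.6}(a) requires no structure on $T$ beyond boundedness, and $\mathcal{R}(A)$ is Hermitian, so it applies here.

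The argument contains no real obstacle. The only points to check are that the operator norm qualifies as a unitarily invariant norm in Lemma \ref{normr1}, and that $\mathcal{D}'$ in the statement is a subset of the closed unit disc $\mathcal{D}$ (presumably $\mathcal{D}$ or $\mathcal{D}\setminus\{0\}$). Both facts used about $q$, namely $|q|\le 1$ and $|q|\ge 0$, hold for every $q\in\mathcal{D}$; when $q=0$, (b) is trivial.
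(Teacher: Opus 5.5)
Your proposal is correct and follows the paper's own proof essentially verbatim. Part (a) chains $w_q(A)\le\|A\|$ with the operator-norm case of Lemma \ref{normr1}, and part (b) multiplies (a) by $|q|$ and applies Lemma \ref{co1.6}(a) to the Hermitian matrix $\mathcal{R}(A)$.
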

\begin{proof}
\begin{itemize}
\item [(a)]Using Lemma \ref{normr1}, we have
\begin{equation*}
   w_q(A) \le \|A\| \le \sec(\alpha) \| \mathcal{R}(A)\|. 
\end{equation*}
Thus,
\begin{equation}\label{3.4}
    \cos(\alpha)w_q(A) \le \| \mathcal{R}(A)\|.
\end{equation}
\item [(b)] Part (b) follows from Corollary \ref{co1.6}(a) and relation (\ref{3.4}).
\end{itemize}

\end{proof}
\begin{theorem}\label{opmo}
If $ A \in \prod_{s,\alpha}^n$, $q \in \mathcal{D'}$ and $f \in \mathcal{F}$ then
\begin{itemize}
    \item [(a)] $|q|^2\cos{\alpha}f(w_q(A))\le |q|w_q(f(A)) \le \sec^3(\alpha)f(w_q(A)),$
\item [(b)]$|q|w_q((1-\gamma)f(A) +\gamma f(B)) \le \sec^3(\alpha)f((1-\gamma) w_q(A)+\gamma w_q(B))$, where $\gamma \in (0,1)$.
\item [(c)]$|q|w_q(f(A+B)) \le \sec^3(\alpha) w_q(f(A)+f(B)).$
\end{itemize}
\end{theorem}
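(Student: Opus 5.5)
Each part is proved by sandwiching the $q$-numerical radius between norms of real parts, and moving $f$ across via Lemmas \ref{normr} and \ref{normr2}. Throughout we use three facts. First, $f\in\mathcal{F}$ is increasing and concave on $(0,\infty)$. Second, concavity with $f(1)=1$ gives $f(ct)\le c f(t)$ for $c\ge1$, hence $f(ct)\ge c f(t)$ for $0<c\le 1$. Third, $f(A)$ is sectorial with the same angle $\alpha$, by \cite[Corollary 3.4]{bedrani2021numerical}.

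\textbf{Part (a), upper bound.} By Lemma \ref{realcor}(b) applied to the sectorial matrix $f(A)$, we get $|q|\cos\alpha\, w_q(f(A))\le w_q(\mathcal{R}f(A))\le\|\mathcal{R}f(A)\|$, using Lemma \ref{co1.6}(a). Lemma \ref{normr} then gives $\|\mathcal{R}f(A)\|\le\sec^2\alpha\, f(\|\mathcal{R}(A)\|)$. Since $\mathcal{R}(A)$ is Hermitian, the case $A=I$ of Lemma \ref{la-normal} gives $\|\mathcal{R}(A)\|\le \frac{1}{|q|}w_q(\mathcal{R}(A))$. Also $w_q(\mathcal{R}(A))\le w_q(A)$, exactly as $w_{q,A}(\Re_A T)\le w_{q,A}(T)$ was used in Theorem \ref{s1t4.1}. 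Monotonicity of $f$ and $f(t/|q|)\le f(t)/|q|$ then yield
\[
|q|\cos\alpha\, w_q(f(A))\le \frac{\sec^2\alpha}{|q|} f(w_q(A)).
\]
This has an extra factor $|q|$ compared with the target $|q|w_q(f(A))\le\sec^3\alpha\, f(w_q(A))$. So I would instead bound $w_q(f(A))\le\|f(A)\|\le\sec\alpha\,\|\mathcal{R}f(A)\|$, by Lemma \ref{normr1}, and only use $q$ when comparing $\|\mathcal{R}(A)\|$ with $w_q(A)$. This gives $|q|w_q(f(A))\le \sec^3\alpha\, f(w_q(A))$, with room to spare, since only $\sec^3\alpha\ge\sec\alpha\sec^2\alpha$ is needed. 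Keeping track of exactly where each factor of $|q|$ and $\sec\alpha$ enters is the main bookkeeping obstacle.

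\textbf{Part (a), lower bound.} Start from $|q|w_q(f(A))\ge |q|^2\|\mathcal{R}f(A)\|$, using Lemma \ref{co1.6}(a) and $w_q(f(A))\ge w_q(\mathcal{R}f(A))$. Then $\|\mathcal{R}f(A)\|\ge f(\|\mathcal{R}(A)\|)$ by Lemma \ref{normr}. Next, $\|\mathcal{R}(A)\|\ge\cos\alpha\, w_q(A)$ by Lemma \ref{realcor}(a). Finally $f(\cos\alpha\, t)\ge\cos\alpha\, f(t)$ by the concavity fact above. Combining these gives $|q|^2\cos\alpha\, f(w_q(A))\le |q|w_q(f(A))$.

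\textbf{Part (b).} Take $A,B\in\prod_{s,\alpha}^n$. The convex combination $(1-\gamma)f(A)+\gamma f(B)$ is sectorial with angle $\alpha$, so
\[
w_q\bigl((1-\gamma)f(A)+\gamma f(B)\bigr)\le \sec\alpha\,\bigl\|(1-\gamma)\mathcal{R}f(A)+\gamma\mathcal{R}f(B)\bigr\|.
\]
By Lemma \ref{normr2}, $\mathcal{R}f(A)\le\sec^2\alpha\, f(\mathcal{R}(A))$, and similarly for $B$. Operator concavity of $f$ then bounds the sum by $\sec^2\alpha\, f\bigl((1-\gamma)\mathcal{R}(A)+\gamma\mathcal{R}(B)\bigr)$. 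Its norm is $\sec^2\alpha\, f\bigl(\|(1-\gamma)\mathcal{R}(A)+\gamma\mathcal{R}(B)\|\bigr)$, because $f$ is increasing. The triangle inequality, together with $\|\mathcal{R}(X)\|\le w_q(X)/|q|$ as in (a), and then $f(t/|q|)\le f(t)/|q|$, give the claim.

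\textbf{Part (c).} Here $A+B$ is sectorial. Part (a) gives $|q|w_q(f(A+B))\le\sec\alpha\,\|\mathcal{R}f(A+B)\|\le\sec^3\alpha\, \|f(\mathcal{R}(A)+\mathcal{R}(B))\|$, using Lemma \ref{normr2}. Apply inequality \eqref{function} to the positive matrices $\mathcal{R}(A),\mathcal{R}(B)$ to get $\|f(\mathcal{R}(A)+\mathcal{R}(B))\|\le \|f(\mathcal{R}(A))+f(\mathcal{R}(B))\|$. This is at most $\|\mathcal{R}(f(A)+f(B))\|$ by the left half of Lemma \ref{normr2}, since $0\le X\le Y$ implies $\|X\|\le\|Y\|$. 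Finally $\|\mathcal{R}(X)\|=w(\mathcal{R}(X))\le w(X)$. Converting this last quantity to $w_q(f(A)+f(B))$ costs a factor $1/|q|$ unless we use $\|\mathcal{R}(X)\|\le$ the $q$-version. This last conversion is the step I expect to be delicate: I would try to use that $\mathcal{R}(X)$ is Hermitian, so that Lemma \ref{co1.6}(a) applies, and absorb the $|q|$ on the left.
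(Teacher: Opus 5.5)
Your argument is correct. In (a), the lower bound is the paper's chain. Your final upper bound differs only in pulling $1/|q|$ out of $f$ via $f(t/|q|)\le f(t)/|q|$, where the paper pushes $|q|$ inside via $|q|f(t)\le f(|q|t)$; these are the same fact.

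Part (c) is also the paper's chain, and the step you flag as delicate is not. You lost a factor only because your first inequality discarded the $|q|$ on the right. Write $|q|\,w_q(f(A+B))\le |q|\sec\alpha\,\|\mathcal{R}f(A+B)\|$ and carry $|q|$ along. Then finish with $|q|\,\|\mathcal{R}(X)\|\le w_q(\mathcal{R}(X))\le w_q(X)$ for $X=f(A)+f(B)$, by Lemma \ref{co1.6}(a), exactly as the paper does.

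Two small slips. There is no ``room to spare'' in (a), since $\sec\alpha\cdot\sec^2\alpha=\sec^3\alpha$. Also, $f(ct)\le c f(t)$ for $c\ge 1$ comes from concavity together with $f>0$ on $(0,\infty)$, not from $f(1)=1$.

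Part (b) is where you genuinely take a different route. The paper uses subadditivity of $w_q$, applies the upper bound of (a) to $f(A)$ and $f(B)$ separately, and closes with scalar concavity, $(1-\gamma)f(a)+\gamma f(b)\le f((1-\gamma)a+\gamma b)$. You stay at the matrix level:
\begin{itemize}
\item the convex combination lies in $\prod_{s,\alpha}^n$ because $S_\alpha$ is a convex cone;
\item Lemma \ref{normr1} and Lemma \ref{normr2} reduce everything to real parts;
\item operator concavity of $f$ gives $(1-\gamma)\mathcal{R}f(A)+\gamma\mathcal{R}f(B)\le\sec^2\alpha\, f\bigl((1-\gamma)\mathcal{R}(A)+\gamma\mathcal{R}(B)\bigr)$ before you pass to norms.
\end{itemize}
This is valid, since every $f\in\mathcal{F}$ is operator concave (for instance, as an average of the operator concave maps $x\mapsto((1-s)+sx^{-1})^{-1}$). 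It yields the same constant.

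The paper's version is shorter and needs only scalar concavity once (a) is known. Yours is independent of (a) but relies on the deeper operator-concavity property. Its intermediate estimate $\sec^3\alpha\, f\bigl(\|(1-\gamma)\mathcal{R}(A)+\gamma\mathcal{R}(B)\|\bigr)$ can beat the corresponding $\sec^3\alpha\bigl((1-\gamma)f(\|\mathcal{R}(A)\|)+\gamma f(\|\mathcal{R}(B)\|)\bigr)$ from the paper's route when the real parts are ``spread out''. The two are incomparable in general, and both collapse to the same final bound.
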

\begin{proof}
\begin{itemize}
    \item [(a)]
The relation 
\begin{equation*}\label{alpha}
    |q|\cos(\alpha)f(w_q(A)) \le |q|f ( \cos(\alpha)w_q(A))
\end{equation*}
holds true as $f(rx) \ge r f(x)$ for all $0 \le r \le 1$. Also,
 \begin{align*}
   |q|f (  \cos(\alpha)w_q(A)) \le & |q|f(\|\mathcal{R}(A)\|)\\
   \le & |q|\| \mathcal{R}f(A) \|\\
   = & \| |q|\mathcal{R}f(A) \|\\
   \le & w_q( \mathcal{R}f(A))\\
    \le & w_q(f(A)),
 \end{align*}
 where the first, second, and fourth inequalities follow from Lemma \ref{realcor}, Lemma \ref{normr}, and Lemma \ref{co1.6}
 respectively. Hence
 \begin{equation*}
     |q|\cos(\alpha)f(w_q(A))\le w_q(f(A)).
 \end{equation*}
 Again
 \begin{align*}
|q|w_q(f(A)) \le & 
|q| \|f(A)\|\\
\le & |q| \sec(\alpha)\|\mathcal{R}(f(A))\|\\
\le& |q| \sec^3(\alpha)f(\| \mathcal{R}(A) \|)\\
\le & \sec^3(\alpha)f(|q|\| \mathcal{R}(A) \|)\\
\le  &\sec^3(\alpha)f(w_q( \mathcal{R}(A)))\\
\le & \sec^3(\alpha)f(w_q(A)),
\end{align*}
where the first and second inequalities follow from Lemma \ref{normr1} and Lemma \ref{normr} respectively and the third inequality follow from the fact that $f(rx) \ge r f(x)$ for all $0 \le r \le 1$.
This proves the result.
 \item [(b)] For $\gamma \in (0,1)$, we have 
 $$w_q((1-\gamma)f(A)+\gamma f(B)) \le (1-\gamma)w_q(f(A))+\gamma w_q(f(B)). $$
 By using part (a), we have 
 $$w_q((1-\gamma)f(A)+\gamma f(B)) \le \frac{1}{|q|}\sec^3(\alpha)((1-\gamma)f(w_q(A))+\gamma f(w_q(B))).$$
 Concavity of $f$ implies that 
 $$|q|w_q((1-\gamma)f(A) +\gamma f(B)) \le \sec^3(\alpha)f((1-\gamma) w_q(A)+\gamma w_q(B)).$$
 \item [(c)] To prove the inequality, we have
 
 By using Lemma \ref{normr1}, Lemma \ref{normr2} and relation (\ref{function}), respectively, we have
 \begin{align*}
   w_q(f(A+B)) \le & \|f(A+B)\|\\
   \le & \sec(\alpha)\|\mathcal{R}f(A+B)\|\\
   \le & \sec^3(\alpha)\|f(\mathcal{R}(A+B))\|\\
   = & \sec^3(\alpha)\|f(\mathcal{R}(A)+\mathcal{R}(B))\|\\
   \le & \sec^3(\alpha)\|f(\mathcal{R}(A))+f(\mathcal{R}(B))\|\\
   \le & \sec^3(\alpha)\|\mathcal{R}(f(A)+f(B))\|\\
    \le & \frac{1}{|q|}\sec^3(\alpha)w_q(\mathcal{R}(f(A)+f(B))\\
    \le & \frac{1}{|q|}\sec^3(\alpha)w_q(f(A)+f(B)),
  \end{align*}
  where the second and third inequalities follow from Lemma \ref{normr1} and Lemma \ref{normr2} respectively, the fifth inequality follows from the relation (\ref{function}), and the sixth and seventh inequalities follow from Lemma \ref{normr2} and Corollary \ref{co1.6}.
 \end{itemize}
\end{proof}

Based on the above result, we obtain the following important Corollaries.
In particular, if $f(x)=x^t$; $t \in (0,1)$, we obtain the following result.
\begin{corollary}\label{cort}
If $ A \in \prod_{s,\alpha}^n$, $q \in \mathcal{D'}$, and $t \in(0,1)$ then
\begin{itemize}
    \item [(a)] $|q|^2 \cos(\alpha)w^t_q(A)\le |q|w_q(A^t) \le \sec^3(\alpha)w^t_q(A)$,
    \item [(b)] $|q|w_q( (1-\gamma) A^t+ \gamma B^t ) \le \sec^3(\alpha)((1-\gamma)w_q(A)+\gamma w_q(B))^t$, $\gamma \in (0,1)$,
    \item [(c)] $|q|w_q((A+B)^t) \le \sec^3(\alpha) w_q(A^t+B^t)$.
\end{itemize}
\end{corollary}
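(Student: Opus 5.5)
This is a direct specialization of Theorem \ref{opmo} to the power function $f(x)=x^t$ with $t\in(0,1)$. The plan is to check that $f\in\mathcal{F}$, observe that $f(A)$ is the matrix power $A^t$ of the sectorial matrix, and then substitute into parts (a), (b) and (c).

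First, $f(x)=x^t$ maps $(0,\infty)$ to $(0,\infty)$ and satisfies $f(1)=1$. By the L\"owner--Heinz inequality it is operator monotone for $0<t<1$, so $f\in\mathcal{F}$. For an accretive (in particular sectorial) matrix $A$, the integral representation
\begin{equation*}
f(A)=\int_0^1((1-s)I+sA^{-1})^{-1}\,dv_f(s)
\end{equation*}
with the representing measure of $x^t$ gives the principal power $A^t$. This is the standard identification used in \cite{bedrani2021positive}, and I will take it as given. Since $A$ is sectorial, $A^t$ is also sectorial \cite[Corollary 3.4]{bedrani2021numerical}, so all quantities in the statement are well defined.

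Next, substitute into each part. Part (a) of Theorem \ref{opmo} gives
\begin{equation*}
|q|^2\cos(\alpha)\,(w_q(A))^t\le |q|\,w_q(A^t)\le \sec^3(\alpha)\,(w_q(A))^t,
\end{equation*}
which is (a). Part (b) with $f(A)=A^t$ and $f(B)=B^t$ gives
\begin{equation*}
|q|\,w_q((1-\gamma)A^t+\gamma B^t)\le \sec^3(\alpha)\big((1-\gamma)w_q(A)+\gamma w_q(B)\big)^t,
\end{equation*}
which is (b). Part (c) gives $|q|\,w_q((A+B)^t)\le \sec^3(\alpha)\,w_q(A^t+B^t)$, which is (c).

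For (b) and (c), $B$ must also lie in $\prod_{s,\alpha}^n$, as Theorem \ref{opmo} implicitly assumes. Then $A+B\in\prod_{s,\alpha}^n$ as well, because the sector $S_\alpha$ is a convex cone and $W(A+B)\subseteq W(A)+W(B)$. This hypothesis should be stated explicitly in the proof.

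The only step that is not pure bookkeeping is the identification $f(A)=A^t$ for non-Hermitian sectorial $A$. I will handle it by citing the representation above from \cite{bedrani2021positive}, where the power function is treated explicitly. Everything else is immediate from Theorem \ref{opmo}.
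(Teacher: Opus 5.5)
Your proposal is correct and matches the paper, which obtains Corollary \ref{cort} simply by taking $f(x)=x^t$ in Theorem \ref{opmo}. You also make explicit three points the paper leaves implicit: that $x^t\in\mathcal{F}$ by L\"owner--Heinz, that $f(A)$ is the principal power $A^t$, and that $B$ (hence $A+B$) must also lie in $\prod_{s,\alpha}^n$ for parts (b) and (c).
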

\begin{remark}
     If we take $\gamma= \frac{1}{2}$ in Corollary \ref{cort}(b), we have
  \begin{equation}\label{ine}
     |q|w_q(A^t+B^t) \le 2^{1-t}\sec^3(\alpha)(w_q(A)+w_q(B))^t, 
  \end{equation}
  which gives us subadditive behaviors of $q$-numerical radius for non integral powers $t \in(0,1)$.
 By Corollary \ref{cort}(c)  and inequality \eqref{ine}, we have
 \begin{equation*}
   |q| \cos^3(\alpha)w_q((A+B)^t) \le w_q(A^t+B^t) \le \frac{1}{|q|} 2^{1-t}\sec^3(\alpha)(w_q(A)+w_q(B))^t. 
 \end{equation*}
\end{remark}
If $A$ is a positive matrix then by taking $\alpha=0$ in Theorem \ref{opmo}, we have the following corollary.
\begin{corollary}\label{corpositive}
 If A is a positive matrix, $q \in \mathcal{D'}$ and $f \in \mathcal{F}$  then
 \begin{itemize}
     \item [(a)]$ |q|^2f(w_q(A))\le |q|w_q(f(A)) \le f(w_q(A))$,
     \item [(b)] $|q|w_q((1-\gamma)f(A) +\gamma f(B)) \le f((1-\gamma) w_q(A)+\gamma w_q(B))$,
     \item [(c)]$|q|w_q(A+B)^t\le w_q(A^t+B^t)$.
 \end{itemize}
\end{corollary}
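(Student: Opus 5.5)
The corollary is obtained by specializing Theorem \ref{opmo} to $\alpha=0$. A positive matrix $A$ has $W(A)\subseteq[0,\infty)$. Assuming $A$ is positive definite, as is implicit so that $f(A)$ is defined through the integral representation, we get $W(A)\subseteq(0,\infty)=S_0$, hence $A\in\prod_{s,0}^n$. Since $\sec(0)=\cos(0)=1$, every factor $\sec^3(\alpha)$ and $\cos(\alpha)$ in Theorem \ref{opmo} equals $1$.

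For (a), we apply Theorem \ref{opmo}(a) with $\alpha=0$, which gives $|q|^2f(w_q(A))\le |q|w_q(f(A))\le f(w_q(A))$ directly. As a check, we can also rerun the chain with $\alpha=0$. Here $\mathcal{R}(A)=A$ and $f(A)$ is positive, so Lemma \ref{normr} reduces to the equality $\|f(A)\|=f(\|A\|)$. The inequalities that remain are Lemma \ref{co1.6}(a), used as $|q|\|A\|\le w_q(A)\le\|A\|$, together with the scaling property $f(rx)\ge rf(x)$ for $0\le r\le 1$.

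For (b), we take $B$ positive as well, apply Theorem \ref{opmo}(b) with $\alpha=0$, and use the concavity of $f$ exactly as in that proof.

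For (c), the statement is written for $f(x)=x^t$ with $t\in(0,1)$. We read the left side as $|q|w_q((A+B)^t)$. With this choice of $f$ and $\alpha=0$, Theorem \ref{opmo}(c), equivalently Corollary \ref{cort}(c), gives $|q|w_q((A+B)^t)\le w_q(A^t+B^t)$. For positive $A,B$ the chain simplifies: $\mathcal{R}$ acts as the identity, and \eqref{function} is applied with the operator norm.

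The only point that needs care is that the theorem's hypotheses are satisfied. Membership in $S_0$ requires $\mathcal{R}z>0$, so we need strict positivity. If the corollary is meant for merely positive semidefinite $A$, we would recover it by applying the result to $A+\epsilon I$ and letting $\epsilon\to0$, using continuity of $f$, of $w_q$, and of the functional calculus. We also need to note that the notation $w_q(A+B)^t$ in (c) is to be read as $w_q((A+B)^t)$.
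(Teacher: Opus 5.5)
Your proposal is correct and takes the same approach as the paper, which obtains the corollary simply by setting $\alpha=0$ in Theorem \ref{opmo}, so that every $\cos(\alpha)$ and $\sec^3(\alpha)$ factor becomes $1$. Your extra points are sensible clarifications that the paper leaves implicit: strict positivity is needed for $A\in\prod_{s,0}^n$ (with approximation by $A+\epsilon I$ otherwise), $B$ must also be positive in (b), and (c) should be read as $|q|w_q((A+B)^t)\le w_q(A^t+B^t)$ with $f(x)=x^t$.
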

\begin{remark}
 Taking $q=1$ in Corollary \ref{corpositive}(c), we get well known inequality $\|(A+B)^t\|\le \|A^t+B^t\|$ for positive matrices $A$ and $B$.
\end{remark}

In the next theorem, we will discuss the $q$-numerical radius inequalities of matrix means for sectorial matrices. 
\begin{theorem}
  Let $A,B,C,D \in \prod_{s,\alpha}^n$, $q \in \mathcal{D'}$ such that $\mathcal{R}(A) \le \mathcal{R}(C)$ and $\mathcal{R}(B) \le \mathcal{R}(D)$, then
  \begin{equation*}
|q|w_q(A \sigma_f B)\le \sec^3(\alpha)w_q(C \sigma_f D).      
  \end{equation*}
\end{theorem}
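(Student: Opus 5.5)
The plan is to mirror the chain of norm estimates used in Theorem \ref{opmo}(c), now using the known real-part estimate for matrix means of sectorial matrices together with the monotonicity property \eqref{mean}. The key external ingredient is the analogue of Lemma \ref{normr2} for means, established in \cite{bedrani2021positive}:
\[
\mathcal{R}(A)\sigma_f\mathcal{R}(B)\le \mathcal{R}(A\sigma_f B)\le \sec^2(\alpha)\,\mathcal{R}(A)\sigma_f\mathcal{R}(B)
\]
for $A,B\in\prod_{s,\alpha}^n$. I will assume this, since it is proved there by the same integral representation through weighted harmonic means that underlies Lemma \ref{normr2}.

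The steps, in order, are as follows.

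First, bound the $q$-numerical radius by the norm and pass to the real part with Lemma \ref{normr1}. Since $A\sigma_f B$ is again sectorial with the same angle $\alpha$, this gives $w_q(A\sigma_f B)\le\|A\sigma_f B\|\le\sec(\alpha)\|\mathcal{R}(A\sigma_f B)\|$.

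Next, apply the upper half of the displayed real-part inequality. Both sides are positive, so the norm is monotone and we obtain $\|\mathcal{R}(A\sigma_f B)\|\le \sec^2(\alpha)\|\mathcal{R}(A)\sigma_f\mathcal{R}(B)\|$.

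Then use the hypotheses $\mathcal{R}(A)\le\mathcal{R}(C)$ and $\mathcal{R}(B)\le\mathcal{R}(D)$ together with property \eqref{mean}. This yields $\mathcal{R}(A)\sigma_f\mathcal{R}(B)\le\mathcal{R}(C)\sigma_f\mathcal{R}(D)$, hence the same inequality for the norms.

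Finally, return to $C\sigma_f D$. The lower half of the real-part inequality gives $\|\mathcal{R}(C)\sigma_f\mathcal{R}(D)\|\le\|\mathcal{R}(C\sigma_f D)\|$. Lemma \ref{co1.6}(a) then gives $\|\mathcal{R}(C\sigma_f D)\|\le \frac{1}{|q|}w_q(\mathcal{R}(C\sigma_f D))$. The last step is $w_q(\mathcal{R}(X))\le w_q(X)$, which holds because $w_q(X^*)=w_q(X)$ (swap $x,y$ and conjugate, which changes $q$ to $\bar q$; for $q\in\mathcal{D}'$, presumably real, this is harmless) combined with the triangle inequality for $w_q$. Chaining everything gives $w_q(A\sigma_f B)\le \frac{\sec^3(\alpha)}{|q|}w_q(C\sigma_f D)$, which is the claim.

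The main obstacle is the second step: justifying the two-sided real-part inequality for $\sigma_f$ and the sectoriality of the mean. If I cannot simply cite it, I would derive it from the representation $A\sigma_f B=\int_0^1 A!_sB\,dv_f(s)$. This reduces the claim to the harmonic means $A!_sB$, for which $\mathcal{R}(X^{-1})\le \mathcal{R}(X)^{-1}\le\sec^2(\alpha)\mathcal{R}(X^{-1})$ holds for sectorial $X$. Applying this twice, once to $A^{-1}$ and $B^{-1}$ and once to the inverse of their combination, and then using operator monotonicity of inversion, gives the harmonic-mean version of the bound. Integrating against the probability measure $v_f$ preserves it and yields the estimate for $\sigma_f$.
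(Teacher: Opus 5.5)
Your proposal is correct and follows the paper's proof. The paper goes from $\cos(\alpha)w_q(A\sigma_f B)\le\|\mathcal{R}(A\sigma_f B)\|$ (Lemma \ref{realcor}(a)) to $\|\mathcal{R}(A\sigma_f B)\|\le\sec^2(\alpha)\|\mathcal{R}(C\sigma_f D)\|$ by citing \cite[Theorem 5.3]{bedrani2021positive}, which packages exactly your middle three steps (the two-sided real-part bound for $\sigma_f$ combined with \eqref{mean}), and then closes with $|q|\|\mathcal{R}(X)\|\le w_q(\mathcal{R}(X))\le w_q(X)$ just as you do. The only caveat concerns your fallback derivation: applying $\mathcal{R}(X^{-1})\le\mathcal{R}(X)^{-1}\le\sec^2(\alpha)\mathcal{R}(X^{-1})$ twice does give the upper half, but for the lower half it only yields $\cos^2(\alpha)\,\mathcal{R}(C)\sigma_f\mathcal{R}(D)\le\mathcal{R}(C\sigma_f D)$, which would degrade your constant to $\sec^5(\alpha)$. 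Without the citation you would therefore need an extra ingredient, such as the identity $\mathcal{R}(Y^{-1})=\bigl(\mathcal{R}(Y)+\mathcal{I}(Y)\mathcal{R}(Y)^{-1}\mathcal{I}(Y)\bigr)^{-1}$ together with joint convexity of $(R,S)\mapsto SR^{-1}S$.
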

\begin{proof}
 Using Lemma \ref{realcor}(a), we have
 $$|q| \cos(\alpha)w_q(A \sigma_f B) \le |q|\| \mathcal{R}(A \sigma_f B)\|.$$
 Also, \cite[Theorem 5.3]{bedrani2021positive} implies that
 \begin{align*}
     |q| \cos(\alpha)w_q(A \sigma_f B) \le & |q|\sec^2(\alpha)\| \mathcal{R}(C \sigma_f D)\|\\
     \le & \sec^2(\alpha) w_q(C \sigma_f D).
 \end{align*}
 Hence, the required result.
\end{proof}
\begin{theorem}\label{tmean}
 Let $ A \in \prod_{s,\alpha}^n$, $q \in \mathcal{D'}$. Then
 \begin{equation*}
    |q|^2 w_q(A \sigma_f B) \le \sec^3(\alpha) (w_q(A) \sigma_f w_q(B)). 
 \end{equation*}
\end{theorem}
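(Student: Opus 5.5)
The plan is to follow the same chain of reductions used in Theorem~\ref{opmo}(a) and in the preceding theorem. We pass from $w_q$ to the operator norm of real parts, apply a known norm inequality for matrix means of sectorial matrices, and then return to $w_q$ of the original matrices. We read the statement with both $A,B\in\prod_{s,\alpha}^n$, so that $A\sigma_f B$ is sectorial and the results of \cite{bedrani2021positive} apply.

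\textbf{Upper bound for $w_q(A\sigma_f B)$.} Since $A\sigma_f B\in\prod_{s,\alpha}^n$, we have $w_q(A\sigma_f B)\le \|A\sigma_f B\|$. Lemma~\ref{normr1} then gives $\|A\sigma_f B\|\le \sec(\alpha)\|\mathcal{R}(A\sigma_f B)\|$. Next we invoke the real-part mean inequality from \cite{bedrani2021positive}, of the type
\[
\mathcal{R}(A\sigma_f B)\le \sec^2(\alpha)\,\bigl(\mathcal{R}(A)\sigma_f\mathcal{R}(B)\bigr).
\]
This is the matrix-mean analogue of Lemma~\ref{normr2}, and it is what underlies \cite[Theorem 5.3]{bedrani2021positive} used in the previous theorem. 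Combining these steps,
\[
w_q(A\sigma_f B)\le \sec^3(\alpha)\,\|\mathcal{R}(A)\sigma_f\mathcal{R}(B)\|.
\]

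\textbf{Scalar reduction.} Set $a=\|\mathcal{R}(A)\|$ and $b=\|\mathcal{R}(B)\|$. Then $\mathcal{R}(A)\le aI$ and $\mathcal{R}(B)\le bI$. Monotonicity \eqref{mean}, together with properties (ii) and (iv) and homogeneity, gives
\[
\mathcal{R}(A)\sigma_f\mathcal{R}(B)\le (aI)\sigma_f(bI)=(a\sigma_f b)I,
\]
so $\|\mathcal{R}(A)\sigma_f\mathcal{R}(B)\|\le a\sigma_f b$. If $a$ or $b$ vanishes we perturb by $\epsilon I$ and use property (iii); sectoriality in fact forces $a,b>0$. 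By Lemma~\ref{co1.6}(a) and the elementary bound $w_q(\mathcal{R}(T))\le w_q(T)$ (as used in Theorem~\ref{opmo}),
\[
a\le \frac{1}{|q|}w_q(\mathcal{R}(A))\le \frac{1}{|q|}w_q(A),
\]
and likewise $b\le \frac{1}{|q|}w_q(B)$. Monotonicity and positive homogeneity of the scalar mean $x\sigma_f y=xf(y/x)$ then yield
\[
a\sigma_f b\le \frac{1}{|q|}\bigl(w_q(A)\sigma_f w_q(B)\bigr).
\]

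\textbf{Conclusion and main obstacle.} Chaining the estimates gives
\[
|q|\,w_q(A\sigma_f B)\le \sec^3(\alpha)\bigl(w_q(A)\sigma_f w_q(B)\bigr),
\]
which is stronger than the claim because $|q|^2\le |q|$. The main obstacle is justifying the operator inequality $\mathcal{R}(A\sigma_f B)\le \sec^2(\alpha)(\mathcal{R}(A)\sigma_f\mathcal{R}(B))$ from \cite{bedrani2021positive}. I would first try to cite it directly. Failing that, I would derive it from the integral representation $A\sigma_f B=\int_0^1 A!_sB\,dv_f(s)$: the harmonic mean satisfies $\mathcal{R}(A!_sB)\le\sec^2(\alpha)\,(\mathcal{R}A)!_s(\mathcal{R}B)$ via the standard estimate $\mathcal{R}(T^{-1})\le \sec^2(\alpha)(\mathcal{R}T)^{-1}$ for sectorial $T$, and we then integrate against $v_f$.
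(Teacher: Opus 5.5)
Your chain is the paper's own: Lemma~\ref{realcor}(a), the real-part inequality $\mathcal{R}(A\sigma_f B)\le\sec^2(\alpha)\bigl(\mathcal{R}(A)\sigma_f\mathcal{R}(B)\bigr)$ from \cite[Proposition 5.2]{bedrani2021positive}, and the bound $\|\mathcal{R}(A)\sigma_f\mathcal{R}(B)\|\le\|\mathcal{R}(A)\|\sigma_f\|\mathcal{R}(B)\|$ (the paper cites it from \cite{ando1994majorizations}; you derive it via $aI$, $bI$), followed by Lemma~\ref{co1.6}(a) and monotonicity of the scalar mean; you are also right that homogeneity gives the sharper factor $|q|$, since the paper's final expression $\bigl(\frac{1}{|q|}w_q(A)\bigr)\sigma_f\bigl(\frac{1}{|q|}w_q(B)\bigr)$ equals $\frac{1}{|q|}\bigl(w_q(A)\sigma_f w_q(B)\bigr)$. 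Only your fallback sketch needs repair: the upper estimate $\mathcal{R}(T^{-1})\le\sec^2(\alpha)(\mathcal{R}T)^{-1}$ alone cannot yield $\mathcal{R}(A!_sB)\le\sec^2(\alpha)\,(\mathcal{R}A)!_s(\mathcal{R}B)$; what works is the constant-free $\mathcal{R}(T^{-1})\le(\mathcal{R}T)^{-1}$ for $T=(1-s)A^{-1}+sB^{-1}$, combined with the lower bound $\mathcal{R}(A^{-1})\ge\cos^2(\alpha)(\mathcal{R}A)^{-1}$ and its analogue for $B$.
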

\begin{proof}
 By applying Lemma \ref{realcor}(a), \cite[Proposition 5.2]{bedrani2021positive}, relation (3.13) in \cite{ando1994majorizations} and relation (\ref{mean}), respectively the following inequalities are obtained,
 \begin{align*}
  w_q(A \sigma_f B) \le & \sec(\alpha) \|\mathcal{R}(A \sigma_f B)\|\\
  \le &\sec^3(\alpha)\| \mathcal{R}(A) \sigma_f  \mathcal{R}(B)\|\\
  \le &\sec^3(\alpha)\| \mathcal{R}(A)\| \sigma_f  \|\mathcal{R}(B)\|\\
  \le & \sec^3(\alpha) \left(\frac{1}{|q|}w_q(\mathcal{R}(A)) \right)\sigma_f \left(\frac{1}{|q|}w_q(\mathcal{R}(B)) \right)\\
   \le & \sec^3(\alpha) \left(\frac{1}{|q|}w_q(A)\right) \sigma_f \left(\frac{1}{|q|}w_q(B)\right).
 \end{align*}
 This completes the proof.
\end{proof}
\begin{remark}
 If $A$ and $B$ are positive matrices, we have
 \begin{eqnarray*}
  |q|^2w_q( A \sigma_f B) \le w_q(A) \sigma_f w_q(B),
 \end{eqnarray*}
 where $q \in \mathcal{D'}$.
 If $q=1$, then we get well known inequality $\| A \sigma_f B\| \le \|A\| \sigma_f \|B\|$ for positive matrices $A$ and $B$.
\end{remark}
\begin{corollary}\label{cor3.47}
  Let $ A \in \prod_{s,\alpha}^n$, $q \in \mathcal{D'}$ and $0<t<1$, we have
  \begin{itemize}
      \item [(a)]
      \begin{equation*}
  |q|^2w_q(A \#_t B ) \le \sec^3(\alpha) w_q^{1-t}(A) w_q^t(B).  
  \end{equation*}
  \item [(b)]
   \begin{equation*}
  |q|^2w_q((1-t)A^{-1} +t B^{-1} ) \le \sec^3(\alpha) ((1-t)w_q^{-1}(A)+ tw_q^{-1}(B)).  
  \end{equation*}
  \end{itemize}
  
\end{corollary}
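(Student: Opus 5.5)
Part (a) follows from Theorem \ref{tmean}, and for part (b) I would read $w_q^{-1}(A)$ as $w_q(A^{-1})$; under that reading (b) reduces to convexity of $w_q$. I assume throughout, as the statement clearly intends, that $B \in \prod_{s,\alpha}^n$ as well, so that Theorem \ref{tmean} applies to the pair $A,B$.

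\textbf{Part (a).} I would apply Theorem \ref{tmean} with $\sigma_f = \#_t$, which corresponds to $f(x)=x^t$, $0<t<1$. This $f$ is operator monotone with $f(1)=1$, so $f \in \mathcal{F}$. Theorem \ref{tmean} then gives
\begin{equation*}
|q|^2 w_q(A\#_t B) \le \sec^3(\alpha)\,\bigl(w_q(A)\#_t w_q(B)\bigr).
\end{equation*}
It remains to evaluate the right-hand side. For positive scalars $a,b$ the weighted geometric mean is $a\#_t b = a^{1-t}b^t$. I would justify this from the integral representation, or simply from $a\#_t b = a^{1/2}(a^{-1/2}ba^{-1/2})^t a^{1/2}$ in the commutative case. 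The only point to check is that $w_q(A), w_q(B) > 0$. This holds when $q\neq 0$: Lemma \ref{realcor}(b) gives $w_q(\mathcal{R}(A)) \ge |q|\,\|\mathcal{R}(A)\| > 0$, because $\mathcal{R}(A)>0$ for sectorial $A$, and $w_q(A)\ge w_q(\mathcal{R}(A))$. Substituting $a=w_q(A)$ and $b=w_q(B)$ yields (a).

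\textbf{Part (b).} Under the reading $w_q^{-1}(A) := w_q(A^{-1})$, the argument has three steps.
\begin{enumerate}
\item The matrices $A^{-1}$ and $B^{-1}$ exist, since sectorial matrices are invertible (their numerical range avoids $0$). They are again sectorial in the same sector, because $W(A^{-1})$ is the set of conjugates $\overline{\langle Ax,x\rangle}$ with $x$ replaced by $A^{-1}x$ up to positive scaling. This fact is not needed for the inequality itself.
\item Since $w_q$ is a seminorm, it is subadditive and absolutely homogeneous, hence convex:
\begin{equation*}
w_q\bigl((1-t)A^{-1}+tB^{-1}\bigr)\le (1-t)w_q(A^{-1})+t\,w_q(B^{-1}).
\end{equation*}
Subadditivity follows directly from the definition: for fixed $(x,y)$, $\langle (X+Y)x,y\rangle = \langle Xx,y\rangle + \langle Yx,y\rangle$, then take the supremum.
\item Multiply by $|q|^2 \le 1$ and use $1\le \sec^3(\alpha)$ to obtain the stated inequality.
\end{enumerate}

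\textbf{Main obstacle.} The difficulty is only interpretive. If $w_q^{-1}(A)$ is instead read as the reciprocal $1/w_q(A)$, the natural route is Theorem \ref{tmean} with the harmonic mean $!_t$. That route produces outer inverses on both sides, not the expression as printed, and the scalar reciprocal bound $w_q(A^{-1}) \lesssim 1/w_q(A)$ fails in general. I would therefore fix the notation as above, consistent with the convexity argument, and state explicitly that the inequality is meant with $w_q^{-1}(A) = w_q(A^{-1})$.
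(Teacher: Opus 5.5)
Your part (a) is the paper's argument: apply Theorem \ref{tmean} with $\sigma_f=\#_t$, then use the scalar identity $a\#_t b=a^{1-t}b^t$. One small point: the bound $w_q(\mathcal{R}(A))\ge |q|\,\|\mathcal{R}(A)\|$ that you use for positivity comes from Lemma \ref{co1.6}(a), not Lemma \ref{realcor}(b).

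For (b), the paper takes exactly the route you reject. It applies Theorem \ref{tmean} with $\sigma_f=\,!_t$, which gives
\[
|q|^2\,w_q\bigl(((1-t)A^{-1}+tB^{-1})^{-1}\bigr)\le \sec^3(\alpha)\bigl((1-t)w_q(A)^{-1}+t\,w_q(B)^{-1}\bigr)^{-1}.
\]
It then drops the outer inverses on both sides ``by definition of weighted harmonic mean.'' You are right that this step is invalid.

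Your fix, however, replaces the statement rather than proving it. In (a), $w_q^{1-t}(A)$ means $(w_q(A))^{1-t}$. The paper also builds the right side of (b) from the scalar mean $w_q(A)\,!_t\,w_q(B)$. So the intended meaning is $w_q^{-1}(A)=1/w_q(A)$, and under that reading (b) is false. The correct conclusion is to say so with a counterexample, not to change the notation.

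Take $A=B=\mathrm{diag}(1,k)$ with $k>1$, and any $q\neq 0$. This matrix lies in $\prod_{s,\alpha}^2$ for every $\alpha$. Since $A$ and $A^{-1}$ are Hermitian, Lemma \ref{co1.6} gives $w_q(A^{-1})\ge |q|\,\|A^{-1}\|=|q|$ and $w_q(A)\ge |q|\,\|A\|=|q|k$. So the left side of (b) is at least $|q|^3$, while the right side equals $\sec^3(\alpha)/w_q(A)\le \sec^3(\alpha)/(|q|k)$. The inequality therefore fails once $k>\sec^3(\alpha)/|q|^4$. This is just your remark that the reciprocal bound fails, specialised to $A=B$.

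The convexity inequality you prove is true, but it is trivial: it uses neither sectoriality nor the factors $|q|^2$ and $\sec^3(\alpha)$. The genuine consequence of Theorem \ref{tmean} for the harmonic mean is the displayed inequality with the outer inverses kept.
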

\begin{proof}
\begin{itemize}
    \item [(a)]
      Taking $\sigma_f= \#_t$, Theorem \ref{tmean} follows
\begin{align*}
|q|^2w_q(A \#_t B ) \le & \sec^3(\alpha) (w_q(A) \#_t w_q(B))\\
\le & \sec^3(\alpha) w_q ^{1-t}(A) w_q^t(B).
\end{align*}
\item [(b)]
Taking $\sigma_f= !_t$ in Theorem \ref{tmean}, we have
\begin{align*}
|q|^2w_q(A !_t B ) \le & \sec^3(\alpha) (w_q(A) !_t w_q(B)).
\end{align*}
Hence, by definition of weighted harmonic mean, we have
\begin{equation*}
     |q|^2w_q((1-t)A^{-1} +t B^{-1} ) \le \sec^3(\alpha) ((1-t)w_q^{-1}(A)+ tw_q^{-1}(B)).
\end{equation*}
\end{itemize}
   
\end{proof}
If we take $t=\frac{1}{2}$, we have
\[ |q|^2w_q(A^{-1}+B^{-1}) \le \sec^2(\alpha)(w_q^{-1}(A)+w_q^{-1}(B).\]
Also, the logarithmic mean and Heinz mean satisfy the similar type of relations, as follows
\begin{theorem}
    Let $ A \in \prod_{s,\alpha}^n$, $q \in \mathcal{D'}$, $0<t<1$. Then
 \begin{itemize}
 \item [(a)] $|q|^2w_q(\mathcal{L}(A,B)) \le \sec^3(\alpha) \mathcal{L}(w_q(A), w_q(B))$.
 \item [(b)] $|q|^2w_q(\mathcal{H}_t(A,B))\le \sec^3(\alpha) \mathcal{H}(w_q(A), w_q(B)) $.
 \end{itemize}   
\end{theorem}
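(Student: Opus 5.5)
The plan is to follow the pattern of Theorem \ref{tmean}: pass through the real part, use the integral representations of the two means, and then return to $w_q$ via Lemma \ref{co1.6}. Both the logarithmic mean and the Heinz mean are averages of weighted geometric means $A\#_sB$. So the natural first step is to apply Corollary \ref{cor3.47}(a) pointwise in $s$. We then integrate or average, and use that $w_q$ is a norm (subadditive and homogeneous) on the matrix side.

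For (a), write $\mathcal{L}(A,B)=\int_0^1 A\#_sB\,ds$. By the triangle inequality for $w_q$, extended to the Riemann integral by continuity of $s\mapsto A\#_sB$, we get
\begin{equation*}
w_q(\mathcal{L}(A,B))\le \int_0^1 w_q(A\#_sB)\,ds .
\end{equation*}
Corollary \ref{cor3.47}(a), applied for each $s\in(0,1)$, gives $|q|^2 w_q(A\#_sB)\le \sec^3(\alpha)\,w_q^{1-s}(A)w_q^{s}(B)$. Integrating in $s$ gives
\begin{equation*}
|q|^2 w_q(\mathcal{L}(A,B))\le \sec^3(\alpha)\int_0^1 w_q(A)\#_s w_q(B)\,ds=\sec^3(\alpha)\,\mathcal{L}(w_q(A),w_q(B)).
\end{equation*}
The final equality holds because, for positive scalars $a,b$, one has $a\#_sb=a^{1-s}b^s$. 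Hence the integral is exactly the scalar logarithmic mean. Both endpoints $s=0,1$ form a null set, so they cause no trouble.

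For (b), the argument is the same with a two-term average. We have $w_q(\mathcal{H}_t(A,B))\le \tfrac12\big(w_q(A\#_tB)+w_q(A\#_{1-t}B)\big)$. Apply Corollary \ref{cor3.47}(a) with weights $t$ and $1-t$, which is legitimate since $1-t\in(0,1)$. This yields
\begin{equation*}
|q|^2 w_q(\mathcal{H}_t(A,B))\le \sec^3(\alpha)\,\frac{w_q^{1-t}(A)w_q^t(B)+w_q^{t}(A)w_q^{1-t}(B)}{2}=\sec^3(\alpha)\,\mathcal{H}_t(w_q(A),w_q(B)).
\end{equation*}
This is the scalar Heinz mean, which is what the notation $\mathcal{H}(w_q(A),w_q(B))$ in the statement should be read as.

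The main point needing care is the hypothesis bookkeeping rather than the estimates. Corollary \ref{cor3.47} (through Theorem \ref{tmean}) tacitly requires both $A$ and $B$ to lie in $\prod_{s,\alpha}^n$. This is needed so that $A\#_sB$ is defined and sectorial with the same angle, as in \cite{raissouli2017relative, bedrani2021positive}. I would therefore state explicitly that $B\in\prod_{s,\alpha}^n$ as well, and that $q\neq 0$ so the factor $|q|$ may be moved. The second delicate point is the integral triangle inequality in (a). I would justify it by approximating the integral with Riemann sums, applying subadditivity and homogeneity of $w_q$ to each sum, and passing to the limit using continuity of $w_q$ (since $w_q\le\|\cdot\|$).
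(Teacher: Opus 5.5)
Your proof is correct and takes essentially the same route as the paper. Both arguments apply the triangle inequality for $w_q$ to the integral in (a) and to the two-term average in (b), then use Corollary \ref{cor3.47}(a) pointwise, then recognize the scalar mean. Your version is slightly more careful than the paper's in three ways: you state the identity $\int_0^1 a^{1-s}b^s\,ds=\mathcal{L}(a,b)$ explicitly, you get the right constant in (b) (the paper's last line carries a spurious factor $\tfrac12$), and you add the needed hypotheses $B\in\prod_{s,\alpha}^n$ and $q\neq 0$.
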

\begin{proof}
 \begin{itemize}
\item [(a)] $|q|^2w_q(\mathcal{L}(A,B)) \le w_q (\int_0^1 A \#_t B dt )\le  \int_0^1 w_q(A \#_t B )dt  $.
Using Corollary \ref{cor3.47}, we have
\begin{equation*}
w_q(\mathcal{L}(A,B)) \le \frac{\sec^3(\alpha)}{|q|^2}\int_0^1 w_q^{1-t}(A) w_q^t(B)dt.   
\end{equation*}
\item [(b)]
Using definition of $H_t(A,B)$ and corollary \ref{cor3.47} respectively, we have
\begin{align*}
   w_q(H_t(A,B))=&w_q \left(\frac{A\#_t B+A\#_{1-t} B}{2}\right)\\
   \le & \frac{w_q(A\#_t B)}{2}+\frac{w_q(A\#_{1-t} B)}{2}\\
   \le & \frac{\sec^3(\alpha)}{2|q|^2}(w_q^{1-t}(A)w_q^t(B)+w_q^{1-t}(B)w_q^t(A)\\
   \le & \frac{\sec^3(\alpha)}{2|q|^2} H_t(w_q(A),w_q(B)).
\end{align*}
 \end{itemize}   
\end{proof}
The next theorem gives us a relation among $w_q (A \#_t B)$, $w_q(H_t(A,B))$ and $w_q(A\Delta B)$, where $A\Delta B$ denotes the arithmetic mean of $A$ and $B$.
\begin{theorem}
   Let $ A \in \prod_{s,\alpha}^n$, $q \in \mathcal{D'}$, and $0<t<1$. Then
   \begin{equation*}
      |q|^2\cos^4(\alpha) w_q (A \# B)\le |q|w_q(H_t(A,B)) \le{\sec^4(\alpha)}w_q(A\Delta B). 
   \end{equation*}
\end{theorem}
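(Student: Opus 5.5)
The plan is to go through the operator norm of real parts, as in the proofs of Theorem \ref{opmo} and Theorem \ref{tmean}, and to compare real parts using the known norm inequalities for means of sectorial matrices. The chain I aim for is
\[
|q|\cos(\alpha)\, w_q(A\#B)\le |q|\,\|\mathcal{R}(A\#B)\|, \qquad
\|\mathcal{R}(H_t(A,B))\|\le \frac{1}{|q|}\, w_q(H_t(A,B)),
\]
where the first inequality is Lemma \ref{realcor}(a) and the second is Lemma \ref{co1.6}(a) applied to $H_t(A,B)$. The remaining task is to bound $\|\mathcal{R}(A\#B)\|$ above by a constant times $\|\mathcal{R}(H_t(A,B))\|$, and similarly to bound $\|\mathcal{R}(H_t(A,B))\|$ above by $\|\mathcal{R}(A\Delta B)\|$.

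For the left inequality I will use the sectorial version of the Heinz inequality between real parts. By \cite[Proposition 5.2]{bedrani2021positive} (the tool already used in Theorem \ref{tmean}) we have
\[
\mathcal{R}(A)\#_s\mathcal{R}(B)\le \mathcal{R}(A\#_s B)\le \sec^2(\alpha)\,\mathcal{R}(A)\#_s\mathcal{R}(B)
\]
for each $s\in(0,1)$. For positive matrices the Heinz mean dominates the geometric mean, that is, $\mathcal{R}(A)\#\mathcal{R}(B)\le \tfrac12\bigl(\mathcal{R}(A)\#_t\mathcal{R}(B)+\mathcal{R}(A)\#_{1-t}\mathcal{R}(B)\bigr)$. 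Combining these gives
\[
\mathcal{R}(A\#B)\le \sec^2(\alpha)\,\mathcal{R}(A)\#\mathcal{R}(B)\le \sec^2(\alpha)\,\mathcal{R}(H_t(A,B)).
\]
Taking norms of these positive matrices yields $\|\mathcal{R}(A\#B)\|\le\sec^2(\alpha)\|\mathcal{R}(H_t(A,B))\|$. Putting everything together, I get $|q|^2\cos^3(\alpha)w_q(A\#B)\le |q|w_q(H_t(A,B))$. Since $\cos^4(\alpha)\le\cos^3(\alpha)$, this implies the stated left inequality.

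For the right inequality I start from $|q|w_q(H_t(A,B))\le |q|\,\|H_t(A,B)\|\le \sec(\alpha)\,\|\mathcal{R}(H_t(A,B))\|$, using Lemma \ref{normr1} together with the fact that $H_t(A,B)$ is sectorial with the same angle, which I will cite from \cite{bedrani2021numerical}. Next I use $\mathcal{R}(A\#_s B)\le\sec^2(\alpha)\,\mathcal{R}(A)\#_s\mathcal{R}(B)$ and the positive Heinz–arithmetic inequality $\mathcal{R}(A)\#_t\mathcal{R}(B)+\mathcal{R}(A)\#_{1-t}\mathcal{R}(B)\le \mathcal{R}(A)+\mathcal{R}(B)$. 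These give $\mathcal{R}(H_t(A,B))\le\sec^2(\alpha)\,\mathcal{R}(A\Delta B)$, hence $\|\mathcal{R}(H_t)\|\le\sec^2(\alpha)\|\mathcal{R}(A\Delta B)\|$. Finally, $\|\mathcal{R}(A\Delta B)\|=w(\mathcal{R}(A\Delta B))\le w(A\Delta B)$, and I need to convert this to $w_q$.

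The main obstacle is this final conversion. The bound $\|\mathcal{R}(T)\|\le \frac{1}{|q|}w_q(\mathcal{R}(T))\le\frac1{|q|}w_q(T)$ costs a factor $1/|q|$, and this factor is not present on the right side of the statement. I would first try to absorb it by restricting to the case $q\in\mathcal{D}'$ and using the extra $\sec(\alpha)$ of slack, since $\sec^3$ arises naturally whereas $\sec^4$ is stated. If that fails, the honest fallback is to prove the weakened form $|q|^2 w_q(H_t(A,B))\le\sec^3(\alpha)w_q(A\Delta B)$, with the $|q|$ powers arranged consistently with Theorem \ref{tmean}, and to note that the stated version follows when $|q|=1$.
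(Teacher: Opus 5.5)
Your left-hand inequality goes through. The right-hand one, however, is not proved as written, and the obstacle you describe is one you created yourself. In the first step you bound $|q|\,w_q(H_t(A,B))\le |q|\,\|H_t(A,B)\|\le\sec(\alpha)\,\|\mathcal{R}(H_t(A,B))\|$, discarding the factor $|q|$ via $|q|\le 1$. That factor is exactly what pays for the conversion $|q|\,\|\mathcal{R}(T)\|\le w_q(\mathcal{R}(T))$ at the end. If you keep it, the chain closes:
\begin{align*}
|q|\,w_q(H_t(A,B)) &\le |q|\sec(\alpha)\,\|\mathcal{R}(H_t(A,B))\| \le |q|\sec^3(\alpha)\,\|\mathcal{R}(A\Delta B)\|\\
&\le \sec^3(\alpha)\,w_q(\mathcal{R}(A\Delta B)) \le \sec^3(\alpha)\,w_q(A\Delta B).
\end{align*}
This uses Lemma~\ref{co1.6}(a) and $w_q(\mathcal{R}(T))\le w_q(T)$, which follows from $w_q(T^*)=w_q(T)$ and subadditivity. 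Since $\sec^3(\alpha)\le\sec^4(\alpha)$, this gives the statement. Neither of your proposed remedies would have worked. The extra $\sec(\alpha)$ cannot absorb $1/|q|$, because it does not depend on $q$ and equals $1$ when $\alpha=0$. The fallback $|q|^2 w_q(H_t(A,B))\le\sec^3(\alpha)\,w_q(A\Delta B)$ is a different inequality, and it does not yield the stated one when $|q|<\cos(\alpha)$.

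With that repair, your argument is a genuinely different and sharper route than the paper's. The paper stays at the level of full operator norms. It quotes $\|A\#B\|\le\sec^3(\alpha)\|H_t(A,B)\|$ and $\|H_t(A,B)\|\le\frac{\sec^3(\alpha)}{2}\|A+B\|$ from \cite{mao2020inequalities}. It then converts norms into $w_q$ via $|q|\cos(\alpha)\|T\|\le w_q(T)$ for sectorial $T$, which costs another $\sec(\alpha)$ and produces $\cos^4(\alpha)$ and $\sec^4(\alpha)$. You instead compare real parts in the L\"owner order. You use $\mathcal{R}(A)\#_s\mathcal{R}(B)\le\mathcal{R}(A\#_sB)\le\sec^2(\alpha)\,\mathcal{R}(A)\#_s\mathcal{R}(B)$ together with the positive Heinz inequalities, obtained from $\sqrt{z}\le\frac{z^t+z^{1-t}}{2}\le\frac{1+z}{2}$ by functional calculus and congruence. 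Because you never pass through the full norms $\|H_t(A,B)\|$ (left side) or $\|A+B\|$ (right side), you save one factor of $\sec(\alpha)$ and get $\cos^3(\alpha)$ and $\sec^3(\alpha)$. When you write it up, state explicitly that $B\in\prod_{s,\alpha}^n$, and that $A\#B$ and $H_t(A,B)$ lie in $\prod_{s,\alpha}^n$, since Lemma~\ref{realcor}(a) and Lemma~\ref{normr1} are applied to them.
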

\begin{proof}
By using \cite[Theorem 2.12 ]{mao2020inequalities} and Theorem 2.10 \cite{rani2025q} respectively, we have
  \begin{align*}
    w_q (A \# B) \le& \|A \# B\|\\
    \le& \sec^3(\alpha) \|H_t(A,B)\| \\
    \le & \frac{\sec^4(\alpha)}{|q|}w_q(H_t(A,B)).
\end{align*}
Again, By using \cite[Theorem 2.12 ]{mao2020inequalities} and Theorem 2.10 \cite{rani2025q} respectively, we obtain that
\begin{align*}
w_q(H_t(A,B)) \le & \|H_t(A,B)\| \\
\le & \frac{\sec^3(\alpha)}{2}\|A+B\|\\
\le & \frac{\sec^4(\alpha)}{2|q|}w_q(A+B)\\
=&\frac{\sec^4(\alpha)}{|q|}w_q(A\Delta B).
\end{align*}
This completes the proof.
\end{proof}
	\bibliographystyle{abbrv}
\bibliography{bib_NR_SP}

\begin{thebibliography}{10}

\bibitem{alakhrass2020note}
M.~Alakhrass.
\newblock A note on sectorial matrices.
\newblock {\em Linear Multilinear Algebra}, 68(11):2228--2238, 2020.

\bibitem{alakhrass2021sectorial}
M.~Alakhrass.
\newblock On sectorial matrices and their inequalities.
\newblock {\em Linear Algebra Appl.}, 617:179--189, 2021.

\bibitem{ando1994majorizations}
T.~Ando.
\newblock Majorizations and inequalities in matrix theory.
\newblock {\em Linear Algebra Appl.}, 199:17--67, 1994.

\bibitem{ando1999norm}
T.~Ando and X.~Zhan.
\newblock Norm inequalities related to operator monotone functions.
\newblock {\em Math. Ann.}, 315(4):771--780, 1999.

\bibitem{arias2008partial}
M.~L. Arias, G.~Corach, and M.~C. Gonzalez.
\newblock Partial isometries in semi-hilbertian spaces.
\newblock {\em Linear Algebra Appl.}, 428(7):1460--1475, 2008.

\bibitem{bedrani2021positive}
Y.~Bedrani, F.~Kittaneh, and M.~Sababheh.
\newblock From positive to accretive matrices.
\newblock {\em Positivity}, 25(4):1601--1629, 2021.

\bibitem{bedrani2021numerical}
Y.~Bedrani, F.~Kittaneh, and M.~Sababheh.
\newblock Numerical radii of accretive matrices.
\newblock {\em Linear Multilinear Algebra}, 69(5):957--970, 2021.

\bibitem{douglus}
R.~G. Douglas.
\newblock On majorization, factorization, and range inclusion of operators on
  hilbert space.
\newblock {\em Proc. Amer. Math. Soc.}, 17(2):413--415, 1966.

\bibitem{drury2015principal}
S.~Drury.
\newblock Principal powers of matrices with positive definite real part.
\newblock {\em Linear Multilinear Algebra}, 63(2):296--301, 2015.

\bibitem{feki2020spectral}
K.~Feki.
\newblock Spectral radius of semi-hilbertian space operators and its
  applications.
\newblock {\em Ann. Funct. Anal.}, 11:929--946, 2020.

\bibitem{feki2026joint}
K.~Feki, A.~Patra, J.~Rani, and Z.~Taki.
\newblock Joint q-numerical ranges of operators in hilbert and semi-hilbert
  spaces.
\newblock {\em Linear and Multilinear Algebra}, pages 1--23, 2026.

\bibitem{gau2021numerical}
H.-L. Gau and P.~Y. Wu.
\newblock {\em Numerical ranges of Hilbert space operators}, volume 179.
\newblock Cambridge University Press, 2021.

\bibitem{kittaneh2005numerical}
F.~Kittaneh.
\newblock Numerical radius inequalities for hilbert space operators.
\newblock {\em Studia Mathematica}, 168(1):73--80, 2005.

\bibitem{kittaneh2026estimation}
F.~Kittaneh, A.~Patra, and J.~Rani.
\newblock On the estimation of the q-numerical radius via orlicz functions.
\newblock {\em Journal of Computational and Applied Mathematics}, page 117340,
  2026.

\bibitem{lin2014singular}
M.~Lin.
\newblock Singular value inequalities for matrices with numerical ranges in a
  sector.
\newblock {\em Oper. Matrices}, 8:1143--1148, 2014.

\bibitem{lin2016some}
M.~Lin.
\newblock Some inequalities for sector matrices.
\newblock {\em Oper. Matrices}, 10(4):915--921, 2016.

\bibitem{mao2020inequalities}
Y.~Mao and Y.~Mao.
\newblock Inequalities for the heinz mean of sector matrices.
\newblock {\em Bull. Iran. Math. Soc.}, 46:1767--1774, 2020.

\bibitem{raissouli2017relative}
M.~Ra{\"\i}ssouli, M.~S. Moslehian, and S.~Furuichi.
\newblock Relative entropy and tsallis entropy of two accretive operators.
\newblock {\em Comptes Rendus Math.}, 355(6):687--693, 2017.

\bibitem{rani2025q}
J.~Rani and A.~Patra.
\newblock $ q $-numerical radius of sectorial matrices and $2 \times 2$
  operator matrices.
\newblock {\em arXiv preprint arXiv:2501.14505}, 2025.

\bibitem{rani2025q1}
J.~Rani, A.~Patra, and R.~Birbonshi.
\newblock On the $ a $-$ q $-numerical range of operators in semi-hilbertian
  spaces.
\newblock {\em arXiv preprint arXiv:2502.19855}, 2025.

\bibitem{sammour2022geometric}
S.~A. Sammour, F.~Kittaneh, and M.~Sababheh.
\newblock A geometric approach to numerical radius inequalities.
\newblock {\em Linear Algebra Appl.}, 652:1--17, 2022.

\bibitem{sen2024note}
A.~Sen, R.~Birbonshi, and K.~Paul.
\newblock A note on the a-numerical range of semi-hilbertian operators.
\newblock {\em Linear Algebra Appl.}, 703:268--288, 2024.

\bibitem{stankovic2024some}
H.~Stankovi{\'c}, M.~Krsti{\'c}, and I.~Damnjanovi{\'c}.
\newblock Some properties of the q-numerical radius.
\newblock {\em Linear Multilinear Algebra}, pages 1--22, 2024.

\bibitem{stankovic2025some}
H.~Stankovi{\'c}, M.~Krsti{\'c}, and I.~Damnjanovi{\'c}.
\newblock Some properties of the q-numerical radius.
\newblock {\em Linear and Multilinear Algebra}, 73(8):1736--1757, 2025.

\bibitem{tan2019logarithmic}
F.~Tan and A.~Xie.
\newblock On the logarithmic mean of accretive matrices.
\newblock {\em Filomat}, 33(15):4747--4752, 2019.

\bibitem{zamani2019numerical}
A.~Zamani.
\newblock A-numerical radius inequalities for semi-hilbertian space operators.
\newblock {\em Linear Algebra Appl.}, 578:159--183, 2019.

\bibitem{zhang2015matrix}
F.~Zhang.
\newblock A matrix decomposition and its applications.
\newblock {\em Linear Multilinear Algebra}, 63(10):2033--2042, 2015.

\end{thebibliography}

\end{document}